%
%
%
%
\documentclass{amsart}

\usepackage{graphicx}
\usepackage{tikz-cd}
\usepackage{hyperref}

\newtheorem{theorem}{Theorem}[section]
\newtheorem{lemma}[theorem]{Lemma}
\newtheorem{corollary}[theorem]{Corollary}

\theoremstyle{definition}
\newtheorem{definition}[theorem]{Definition}
\newtheorem{example}[theorem]{Example}

\theoremstyle{remark}

\numberwithin{equation}{section}




\newcommand{\rightdoublearrow}{\arrow[r, shift right=0.6ex]\arrow[r, shift left=0.6ex]}
\newcommand{\leftquadruplearrow}{\arrow[l, shift right=1.2ex]\arrow[l, shift left=0.4ex]\arrow[l, shift right=0.4ex]\arrow[l, shift left=1.2ex]}
\newcommand{\righttwicedoublearrow}{\arrow[rr, shift right=0.6ex]\arrow[rr, shift left=0.6ex]}
\newcommand{\lefttwicedoublearrow}{\arrow[ll, shift right=0.6ex]\arrow[ll, shift left=0.6ex]}
\newcommand{\lefttwicetriplearrow}{\arrow[ll, shift right=0.8ex]\arrow[ll]\arrow[ll, shift left=0.8ex]}
\begin{document}

\title{Maximal green sequences of quivers with multiple edges}

\author{Kiyoshi Igusa}
\address{Department of Mathematics, Brandeis University, Waltham, Massachusetts 02453}
\curraddr{Department of Mathematics, Brandeis University, Waltham, Massachusetts 02453}
\email{igusa@brandeis.edu}
\author{Ying Zhou}
\address{Department of Mathematics, Brandeis University, Waltham, Massachusetts 02453}
\curraddr{Department of Mathematics, Brandeis University, Waltham, Massachusetts 02453}
\email{yzhou935@brandeis.edu}

\subjclass[2010]{Primary 13F60}

\date{Feb 2019.}

\keywords{cluster algebra, maximal green sequence, quiver}

\begin{abstract}
In this paper we completely describe maximal green sequences (MGS) of acyclic quivers with multiple edges in terms of maximal green sequences of their multiple edge-free (ME-free) versions. In particular we establish that any MGS of a quiver must be an ME-free MGS of its ME-free version. For quivers with oriented cycles our results for acyclic quivers do not apply. However a weaker result is available under certain conditions.\\
\end{abstract}

\maketitle
\section{Introduction}
\indent Maximal green sequences (MGSs) were invented by Bernhard Keller \cite{Kel11}. Brustle-Dupont-Perotin \cite{BDP13} and the paper by the first author together with Brustle, Hermes and Todorov \cite{BHIT15} have proven that there are finitely maximal green sequences when the quiver is of finite, tame type or the quiver is mutation equivalent to a quiver of finite or tame types. Furthermore in \cite{BHIT15} it is proven that any tame quiver has finitely many $k$-reddening sequences.\\
\indent However the situation is still pretty much uncharted in the wild case other than cases where the quiver has three vertices which was proven in  \cite{BDP13} which contains a proof highly dependent on the quiver only having three vertices. Despite the fact that the wild case is still unknown in general we can indeed solve it for many easy cases. For example for quivers such as the $k$-Kronecker quiver and $\begin{tikzcd}1\arrow[r, shift right=0.6ex]\arrow[r, shift left=0.6ex] & 2\arrow[r, shift right=0.6ex]\arrow[r, shift left=0.6ex] & 3\end{tikzcd}$ things are really simple due to the Target before Source Theorem in \cite{BHIT15}.\\
\indent In this paper we will generalize the results and introduce three theorems that can significantly simplify understanding of maximal green sequences in simply-laced quivers with multiple edges.\\
\indent We can completely describe MGSs of ME-ful quivers using MGSs of their ME-free versions.\\
\begin{theorem}
(Theorem \ref{T1B}) MGSs of an acyclic quiver $Q$ are a subset of the set of $Q$-ME-free MGSs of its ME-free version, $Q'$.\label{T1}
\end{theorem}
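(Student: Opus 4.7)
The plan is to take an arbitrary MGS $\mathbf s = (i_1,\dots,i_N)$ of $Q$, apply the same sequence of mutations to $Q'$, and verify two things: that the sequence is still a valid MGS of $Q'$, and that no multiple edge is ever produced in $Q'$ along the way. I would work with the framed quivers $\widehat{Q}$ and $\widehat{Q'}$, so that the green/red status of every mutable vertex is encoded in the signs of the arrows joining it to the frozen frame. The argument is an induction on $t$, with two running claims: \textbf{(A)} a mutable vertex is green in $\widehat{Q}^{(t)}$ if and only if it is green in $\widehat{Q'}^{(t)}$, and \textbf{(B)} the mutable part $(Q')^{(t)}$ contains no multiple edge. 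If both hold for every $t$, the theorem follows: (A) guarantees that $i_{t+1}$ is green in $\widehat{Q'}^{(t)}$ and that all vertices are red in $\widehat{Q'}^{(N)}$, while (B) is precisely the $Q$-ME-free condition.

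Once (B) is in hand, the inductive step for (A) should follow because the frame arrows at each mutable vertex have matching signs in $\widehat{Q}^{(t)}$ and $\widehat{Q'}^{(t)}$: mutation at $i_{t+1}$ modifies the frame by the same local rule in both quivers, and the only way the two signs could come apart is via a cancelation in the 2-path rule that flips an existing frame arrow. An inspection of this cancelation shows that any such flip would require (or create) a multiple edge somewhere, which is forbidden by (B).

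The principal obstacle is therefore (B), and my approach would be by contradiction. Let $t+1$ be the first step at which a multiple edge between $i$ and $j$ appears in $(Q')^{(t+1)}$; then the mutation at $k := i_{t+1}$ must have introduced a second arrow $i \to j$ via a 2-path $i \to k \to j$ in $(Q')^{(t)}$ coexisting with an arrow $i \to j$ already present, forming a small triangular configuration that is also present in $Q^{(t)}$. I would then try to propagate this configuration backwards through the mutation sequence $i_t,\dots,i_1$ to the initial acyclic quiver $Q$, using the target-before-source restriction from \cite{BHIT15} to show that the ordering of mutations inside $\mathbf s$ forced by the triangle conflicts with what an MGS of an acyclic quiver can do. I expect this backward-propagation argument, and the precise way acyclicity of $Q$ enters it, to be the technical heart of the proof.
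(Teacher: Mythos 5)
There is a genuine gap, and it is concentrated in your claim (B) and in the role you assign to it. First, (B) as stated is false: a multiple arrow can perfectly well be created in the mutated exchange quiver of $Q'$ during a legitimate MGS of $Q$. Take $Q$ to be the disjoint union of the acyclic triangle $1\to 2$, $2\to 3$, $1\to 3$ with a double arrow $4\Rightarrow 5$; then $Q'$ is the triangle together with two isolated vertices, and $(2,1,3,2,3,4,5)$ is an MGS of $Q$ (an MGS of each component, concatenated), yet already the first mutation $\mu_2$ produces the double arrow $1\Rightarrow 3$ in the mutable part of both mutated quivers. Second, even where (B) does hold it is not ``precisely the $Q$-ME-free condition'': that condition is about the supports of the $c$-vectors containing both endpoints of a multiple edge of $Q$, not about multiple arrows in the mutated exchange quiver of $Q'$. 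For instance, for $Q$ with $1\Rightarrow 3$, $1\to 2\to 3$ one has $Q'=1\to 2\to 3$ of type $A_3$, whose mutations never create multiple arrows, yet $Q'$ has MGSs using the $c$-vector $(1,1,1)$, which are $Q$-ME-ful and are exactly the ones the theorem must exclude. So even if your induction closed, you would not have established the ``$Q$-ME-free'' half of the statement.

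The second, related gap is that claim (A) tracks only the green/red \emph{signs}, whereas the theorem (with the paper's notion of equivalence of green sequences) needs the two runs to have the same sequence of $c$-vectors, i.e.\ the full $C$-matrices must coincide at every step. The danger is not a sign flip caused by a cancelation in the $2$-path rule: the $c$-vector recursion at a mutation of $k$ uses the entries $b_{kj}$ of the \emph{current} exchange matrix, and the exchange matrices of $\hat Q^{(t)}$ and $\hat{Q'}^{(t)}$ already differ at $t=0$ along the multiple edge, so the columns can diverge in \emph{multiplicity} the first time one mutates at a vertex where the two rows differ, without any triangle or any sign discrepancy being visible beforehand. Ruling this out is exactly the hard content, and it is where the paper's representation theory enters: $c$-vectors occurring in an MGS are real Schur roots, an ME-ful vector cannot be a real (exceptional) root for both $kQ$ and $kQ'$ because the Euler forms differ by at least $2$ on such vectors (Lemma \ref{L2}), and $\dim\mathrm{Ext}^1$ between the bricks at negative $c$-vectors in an MGS is at most $1$ (Lemma \ref{L}, resting on \cite{BHIT15}); the first step at which the two $C$-matrices could disagree then yields a numerical contradiction. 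Your proposed backward propagation of a triangular configuration via target-before-source is aimed at the false statement (B) and, as sketched, supplies no control on these multiplicities, so the combinatorial induction does not close without importing something equivalent to the Euler-form/real-root argument.
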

\begin{theorem}
(Theorem \ref{T3B}) Let $Q$ be an ME-ful acyclic quiver and $Q'$ be its ME-free version. The MGSs of $Q$ are exactly the $Q$-ME-free MGSs $(C_0,C_1,\cdots C_m)$ of $Q'$ such that for any multiple edge from $i$ to $j$ in $Q$ for any $C$-matrix $C_i$ in the MGS such that there exists a negative $c$-vector with support containing $i$ the mutation on $C_i$ in the MGS isn't done on any negative $c$-vector with support containing $j$.\label{T3}
\end{theorem}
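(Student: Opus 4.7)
The plan is to combine Theorem~\ref{T1} with a step-by-step comparison of the $C$-matrix mutation rules for $Q$ and $Q'$, isolating the contribution of each multiple edge.

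For the forward inclusion, I would apply Theorem~\ref{T1} to reduce the problem to checking the stated condition on multiple edges for any MGS $(C_0, \dots, C_m)$ of $Q$. Arguing by contradiction, suppose that at some step $C_k$ there is a multiple edge $i \to j$ in $Q$ and a negative $c$-vector in $C_k$ whose support contains $i$, while the mutation at step $k$ is performed at a $c$-vector whose support contains $j$. Comparing the mutation formulas in $Q$ and $Q'$, the extra copies of the edge $i \to j$ in $Q$ contribute an additional term (proportional to the multiplicity) to the component of the negative $c$-vector indexed by $i$. In the hypothesized configuration this term is nonzero, so the $C$-matrix obtained by $Q$-mutation disagrees with $C_{k+1}$ computed in $Q'$, contradicting the assumption that the sequence is an MGS of $Q$.

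For the reverse inclusion, suppose $(C_0, \dots, C_m)$ is a $Q$-ME-free MGS of $Q'$ satisfying the condition. By induction on $k$, I would show that applying the same sequence of vertex mutations to $Q$ produces the same $C$-matrices. At the inductive step, the $Q$-mutation and $Q'$-mutation of $C_k$ can differ only through contributions from multiple edges; such a contribution can alter a $c$-vector only when some negative $c$-vector has support on the source $i$ of a multiple edge while the mutating $c$-vector has support on its target $j$ — exactly the situation forbidden by the hypothesis. Since the resulting $C$-matrices agree at every step, greenness and maximality transfer from $Q'$ to $Q$, so the sequence is an MGS of $Q$.

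The main obstacle will be the sign bookkeeping in the $c$-vector mutation formulas: I will need to show that the "ME contribution" at each step is indeed of the claimed form and that it vanishes precisely when the stated condition holds. This should follow from sign-coherence together with the tropical mutation rule, but it requires a careful case analysis, since a negative $c$-vector may have support on both $i$ and $j$ or only on $i$, and the mutating $c$-vector may or may not have support on the source $i$ as well — each case must be checked separately to confirm that the forbidden configuration is the unique source of disagreement between $Q$- and $Q'$-mutations.
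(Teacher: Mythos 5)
Your overall skeleton (forward direction via Theorem \ref{T1B} plus a contradiction at the first bad step; reverse direction by induction showing the two mutation processes produce the same $C$-matrices) matches the logical shape of the paper's argument, but the engine you propose to drive it has a genuine gap. Your key computational claim is that at any step the $Q$-mutation and the $Q'$-mutation of $C_k$ differ by ``an additional term proportional to the multiplicity'' of the original multiple edge $i\to j$. This does not follow from sign bookkeeping in the tropical mutation rule, because the mutation of the $C$-matrix at a vertex $k$ uses the entries $b_{kj}$ of the \emph{current} exchange matrix $B_k$, not of the initial quiver. The initial $B$-matrices of $Q$ and $Q'$ differ only at the multiple edges, but after an arbitrary sequence of mutations this discrepancy propagates to other entries, and nothing in your plan controls where it sits or how large it is, nor why it should be detected by the \emph{supports of the negative $c$-vectors} (which is how the theorem's condition is phrased). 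So the induction ``the $C$-matrices agree at every step'' cannot close: agreement of $C_k$ does not by itself give you the comparison of the rows of $B_k$ for $Q$ and $Q'$ that your inductive step silently uses. To make your route work you would need an extra structural input, e.g.\ a tropical-duality statement expressing $B_k$ in terms of $C_k$ and the initial exchange matrix, together with an argument that the resulting discrepancy is nonzero exactly in the forbidden configuration and that a discrepancy with coefficient $\ge 2$ is impossible in an MGS; none of this is supplied.

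The paper closes exactly this hole by representation-theoretic means rather than matrix bookkeeping: negative $c$-vectors are dimension vectors of modules in a simple-minded collection, the mutation coefficients are $\operatorname{Hom}/\operatorname{Ext}^1$ dimensions (\cite{KY12}), and the Euler forms $\langle M,N\rangle_{kQ}$ and $\langle M,N\rangle_{kQ'}$ agree unless a multiple edge runs from the support of $M$ to the support of $N$; combined with the Hom-vanishing inside a simple-minded collection this shows $\dim\operatorname{Ext}^1_{kQ}(M,N)>0$ precisely in the configuration excluded by the theorem, and Lemma \ref{L} (via \cite{BHIT15}) rules out such a mutation in any MGS of $Q$. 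If you want to keep your combinatorial framing, you should either import this dictionary (c-vectors as bricks, mutation coefficients as Ext-dimensions) or prove the $B_k$-versus-$C_k$ relation explicitly; as written, the ``careful case analysis'' you defer is the actual theorem.
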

\indent Let $Q$ be an MGSs of an ME-ful acyclic quiver and $Q'$ be its ME-free version. The MGSs of $Q$ are exactly the $Q$-ME-free MGSs $(C_0,C_1,\cdots C_m)$ of $Q'$ such that for any multiple edge from $i$ to $j$ in $Q$ for any $C$-matrix $C_i$ in the MGS such that there exists a negative $c$-vector with support containing $i$ the mutation on $C_i$ in the MGS isn't done on any negative $c$-vector with support containing $j$.
\indent We can obtain the following crucial corollaries in the acyclic case:
\begin{corollary}\label{C}
(Corollary \ref{CB})The following statements are true:
\begin{enumerate}
\item The number of maximal green sequences of a quiver $Q$ is no greater than that of its ME-free version.
\item All quivers with an MGS-finite ME-free version must themselves be MGS-finite.
\item No minimally MGS-infinite quiver can contain multiple edges.
\item Any two ME-equivalent quivers are MGS-equivalent to each other.
\end{enumerate}
\end{corollary}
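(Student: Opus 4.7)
The plan is to derive all four parts of Corollary~\ref{C} from Theorems~\ref{T1} and~\ref{T3}, with Theorem~\ref{T1} doing most of the work and Theorem~\ref{T3} needed only for the last part.

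Parts (1) and (2) are immediate from Theorem~\ref{T1}. Since every MGS of $Q$ is in particular an MGS of $Q'$ (namely a $Q$-ME-free one), counting yields (1); and the contrapositive of the corresponding finiteness statement gives (2). For part (3), suppose for contradiction that a minimally MGS-infinite quiver $Q$ contained a multiple edge. Its ME-free version $Q'$ is obtained from $Q$ simply by deleting the redundant parallel arrows, so $Q'$ sits inside $Q$ as a proper subquiver. Applying the contrapositive of (2) to the MGS-infinite quiver $Q$ forces $Q'$ to be MGS-infinite as well, contradicting the minimality of $Q$.

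Part (4) requires Theorem~\ref{T3}. Two ME-equivalent quivers $Q_1$ and $Q_2$ share the same ME-free version $Q'$ together with the same set of ``multiple-edge positions''; only multiplicities may differ. The characterization in Theorem~\ref{T3} describes the MGSs of each $Q_i$ as precisely those $Q_i$-ME-free MGSs of $Q'$ satisfying a forbidden-mutation condition that depends only on \emph{which} edges of $Q'$ carry multiple edges in $Q_i$, not on the multiplicities themselves. Hence $Q_1$ and $Q_2$ yield identical sets of MGSs, which is the desired MGS-equivalence.

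The only real subtlety lies in part (3), and it is purely at the level of definitions: one must confirm that the notion of ``minimally MGS-infinite'' is set up so that passing from $Q$ to its ME-free version $Q'$ by removing extra parallel arrows produces a genuine proper sub-object. Granted this definitional compatibility, the entire corollary reduces to two short appeals to Theorems~\ref{T1} and~\ref{T3}, with no further combinatorial work needed.
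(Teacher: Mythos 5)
Your proposal is correct and follows essentially the same route as the paper: parts (1)--(3) are treated as immediate consequences of Theorem~\ref{T1} (the paper proves only (4), declaring the rest obvious), and part (4) is deduced exactly as in the paper, by observing that the characterization of Theorem~\ref{T3} depends only on the positions of the multiple edges, so ME-equivalent quivers satisfy identical conditions. Your explicit remark that (3) needs the notion of ``minimally MGS-infinite'' to be compatible with deleting parallel arrows is a reasonable clarification of a point the paper leaves implicit.
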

\indent If the quiver isn't necessarily acyclic we still have the following result:
\begin{theorem}
(Theorem \ref{T2B})Assume that ($\tilde{Q},\breve{Q})$ are $k$-partition of $Q$ for some $k>1$ any MGS of $Q$ is an MGS of $\tilde{Q}\cup\breve{Q}$.\label{T2}
\end{theorem}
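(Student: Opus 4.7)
The plan is to show that the very same sequence of vertex-mutations constituting an MGS of $Q$ also makes sense in, and terminates at $-I$ inside, the disjoint union $\tilde Q \cup \breve Q$. Since $Q$ and $\tilde Q \cup \breve Q$ differ only in the crossing edges between the two parts (each of multiplicity at least $k>1$ in $Q$ and absent in the union), this reduces to showing that those crossing edges contribute nothing to the mutation formulas actually used in the MGS. Thus the whole question is localized to controlling how $c$-vectors in an MGS of $Q$ interact with the crossing multi-edges.

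The core step I would prove is the invariant that throughout any MGS of $Q$, every $c$-vector in every $C$-matrix has support either entirely in $V(\tilde Q)$ or entirely in $V(\breve Q)$. The argument is by induction on the mutation step: the initial matrix $C_0=I$ has $c$-vectors $e_i$ of singleton support, and the update $c_j'=c_j+[\epsilon\,b_{jk}]_+ c_k$ for $j\ne k$ preserves this property unless $c_j$ and $c_k$ sit in different parts while $b_{jk}\ne 0$. I would rule out this bad configuration by showing that at such a moment the restriction of the exchange matrix to positions $j$ and $k$, together with the surrounding crossing multi-edges forced by the $k$-partition with $k>1$, realizes a Kronecker-type or wilder subconfiguration in which no remaining mutation sequence can drive all $c$-vectors to be negative (in the spirit of the Target-before-Source principle of \cite{BHIT15} and of the argument behind Theorem \ref{T1}). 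This contradicts the assumption that we are inside an MGS, so the invariant persists at every step.

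Granted the invariant, the conclusion is essentially formal: the only discrepancy between the update of $C_i$ in $Q$ versus in $\tilde Q\cup\breve Q$ comes from entries $b_{jk}$ corresponding to the deleted crossing edges; but the invariant together with sign coherence forces the contributions $[\epsilon\,b_{jk}]_+ c_k$ from those entries to be irrelevant (either they arise only when $c_k$ has support outside the partition-cell that would be affected, or they are simply absent in $\tilde Q\cup\breve Q$ and produce no change in $Q$ either). So the same sequence of $C$-matrices is obtained in both quivers, and since the sequence terminates at $-I$ in $Q$ it terminates at $-I$ in $\tilde Q\cup\breve Q$, giving the desired MGS. The main obstacle throughout is the invariant: making precise the "wild blow-up" argument that forbids mixed-support $c$-vectors in the possibly non-acyclic setting, where the ambient cycles of $Q$ can no longer be appealed to directly and one must isolate the obstruction coming purely from the crossing multi-edges.
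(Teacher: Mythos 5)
There is a genuine gap, and it sits exactly where you flag it yourself: the invariant that no $c$-vector along an MGS of $Q$ ever has support meeting both $\tilde{Q}_0$ and $\breve{Q}_0$ carries the whole weight of the theorem, and your proposal does not prove it. The route you sketch --- ruling out the bad update $c_j'=c_j+[\epsilon b_{jk}]_+c_k$ by exhibiting a Kronecker-type subconfiguration, ``in the spirit of the Target-before-Source principle and of the argument behind Theorem~\ref{T1}'' --- cannot work at this level of generality, because that argument is intrinsically acyclic/hereditary: it runs through Euler-form computations for exceptional modules and real Schur roots (Lemmas~\ref{L} and~\ref{L2}), none of which survive when $Q$ has oriented cycles, which is precisely the setting of Theorem~\ref{T2B}. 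Moreover, your sketch uses only that the crossing multiplicities are at least $k>1$, never the actual hypothesis that $k$ divides $b_{ij}$ and $b_{ji}$ for \emph{all} crossing pairs. The counterexample opening Section~4 shows why this cannot suffice: in the non-acyclic case an MGS can mutate greenly at a vertex joined to another by a double arrow and still terminate, and can even cross walls of ME-ful $c$-vectors. So any argument of the shape ``a multiplicity-$\geq 2$ interaction makes completion of the MGS impossible'' proves too much. The paper's mechanism is different and purely arithmetic: the property that every crossing entry of the $2n\times n$ extended exchange matrix is divisible by $k$ is preserved under arbitrary mutation (each crossing entry is only ever incremented by a product in which at least one factor is itself a crossing entry), and it is this mod-$k$ invariant, not a representation-theoretic wildness obstruction, that forbids the mixed-support step. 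Your proposal never identifies or uses it.

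Separately, the step you call ``essentially formal'' is not. Even granting support purity of all $c$-vectors, mutating $Q$ at a vertex $v\in\breve{Q}_0$ changes entries $b_{ij}$ with $i,j\in\tilde{Q}_0$ by $\pm[b_{iv}b_{vj}]_+$, a product of two crossing entries that need not vanish; hence the exchange matrices of $Q$ and of $\tilde{Q}\cup\breve{Q}$ can drift apart in their non-crossing entries, and with them the coefficients $[\epsilon b_{vj}]_+$ entering later $C$-matrix updates. So ``the same sequence of $C$-matrices is obtained in both quivers'' does not follow from the support invariant alone; controlling this drift again goes through the divisibility of crossing entries (combined with sign-coherence). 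Both halves of your plan are therefore missing the same ingredient, and I would restructure the proof around stating and proving the mod-$k$ invariance of crossing entries and deriving the support statement and the matching of mutation sequences from it, rather than around an acyclic-style blow-up argument.
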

\indent In Section 2 we will provide the background required to understand the rest of the paper. In Section 3 we will prove Theorems \ref{T1} and \ref{T3}. In Section 4 we will prove Theorem \ref{T2}.
\section{Background}
\subsection{Quiver Mutations and Maximal Green Sequences}
\indent The concept of maximal green sequences has many different equivalent definitions. We will use a simple definition using quiver mutations. A \textit{cluster quiver} is a quiver without loops or 2-cycles. Mutations of cluster quivers at vertex $k$ are defined in the following way:
\begin{enumerate}
\item For any pair of arrows $i\to k$ and $k\to j$ add an arrow $i\to j$.
\item Reverse all arrows starting from or ending up in $k$.
\item Delete all 2-cycles that are formed due to process (1) and (2).
\end{enumerate}
\begin{definition}
\begin{enumerate}
\item An \textit{ice quiver} is a quiver $Q$ where a possibly empty set, $F\subseteq Q_0$, consists of vertices that can not mutate.
\item The \textit{framed quiver} $\hat{Q}$ of $Q$ is obtained from $Q$ by adding a vertex $i'$ and an arrow $i\rightarrow i'$ for every $i\in Q$. It is an ice quiver in which all the added vertices can not mutate.
\item The \textit{coframed quiver} $\breve{Q}$ of $Q$ is obtained from $Q$ by adding a vertex $i'$ and an arrow $i'\rightarrow i$ for every $i\in Q$. It is also an ice quiver in which all the added vertices can not mutate.
\end{enumerate}
\end{definition}
\indent An ice quiver $(Q,F)$ can not mutate at elements of $F$, so we call them \textit{frozen vertices}.
\begin{definition}
A non-frozen vertex $i$ is \textit{green} if and only if no arrow from a frozen vertex to $i$ exists. Otherwise it is \textit{red}.\cite{Kel11}
\end{definition}
$\begin{tikzcd}
1 \arrow[r] \arrow[green]{d} & 2\arrow[green]{d}\arrow[r,"\mu_1", shift right=3.5ex]  & 1&2\arrow[l]\arrow[green]{d}\\
1' & 2'&1'\arrow[red]{u}& 2'\\
\end{tikzcd}$
\begin{definition}
A \textit{green sequence} is a sequence $\mathbf{i}=(i_1, i_2,\cdots, i_N)$ such that for all $1\leq t\leq N$ the vertex $i_t$ is green in the partially mutated ice quiver $\hat{Q}(\mathbf{i},t)=\mu_{i_{t-1}}\cdots\mu_2\mu_1(\hat{Q})$.
\end{definition}
\begin{definition}
A \textit{maximal green sequence} is a green sequence such that $\hat{Q}(\mathbf{i},N)$ does not have any green vertices.
\end{definition}
\begin{example} For quiver $1\to 2$ Here is one of its two maximal green sequences.
 $\begin{tikzcd}
1 \arrow[r] \arrow[d] & 2\arrow [d]\arrow[r,"\mu_1", , shift right=3.5ex]  & 1&2\arrow[l]\arrow[d]\arrow[r,"\mu_2", shift right=3.5ex]&1\arrow[r] & 2\\
1' & 2'&1'\arrow[u]& 2'&1'\arrow[u] & 2'\arrow[u]\\
\end{tikzcd}$
\end{example}
\subsection{$c$-vectors and exchange matrices}
\indent We can also use $c$\textit{-vectors} for this purpose. To do so we need to reinterpret mutations of cluster quivers in terms of mutations of matrices. We recall that cluster quivers correspond to \textit{exchange matrices} as defined below. For more details we recommend \cite{FZ01} and \cite{FZ06}.\\
\begin{definition}
\cite{FZ01} An \textit{exchange matrix} of a cluster quiver $Q$ with $n$ vertices is an $n\times n$ matrix such that $b_{ij}$ is the number of arrows from $i$ to $j$ minus the number of arrows from $j$ to $i$.
\end{definition}
\indent It is easy to see that exchange matrices of cluster quivers are always antisymmetric which is not true in the more general case of \textit{valued quivers} which we won't discuss in this paper. Moreover there is a 1-1 correspondence between antisymmetric exchange matrices and cluster quivers.\\
\indent Mutations of exchange matrices are defined here which exactly agree with mutations of cluster quivers.\\
\begin{definition}
\cite{FZ01} If we mutate an $n\times n$ exchange matrix $B = (b_{ij})$ at $k$ we obtain $B' = (b'_{ij})$ defined here.
$b'_{ij} = \begin{cases}
-b_{ij} & \text{if }i = k\text{ or }j = k\\
b_{ij} + b_{ik}|b_{kj}| & \text{if }b_{ik}b_{kj} > 0\\
b_{ij} & \text{in all other cases}
\end{cases}$
\end{definition}
\indent Each partially mutated ice quiver corresponds to an \textit{extended exchange matrix} defined below.\\
\begin{definition}
The \textit{extended exchange matrix} $B'$ corresponding to a partially mutated ice quiver $Q'$ is an $2n\times n$ matrix with the rows corresponding to vertices $\{1,2,\cdots, n, 1', 2',\cdots n'\}$ while the columns corresponds to the vertices $\{1,2,\cdots, n\}$. Here we use the number $n+i$ to represent $i'$. $b_{ij}$ is the number of arrows from $i$ to $j$ minus the number of arrows from $j$ to $i$.
\end{definition}
\indent An extended exchange matrix $B'$ has an upper and lower square submatrices, $B$ and $C$ respectively. The lower square matrix $C$ is known as the $C$\textit{-matrix}. Column vectors of an $C$-matrix are known as $c$\textit{-vectors}. \\
\indent We can use a sequence of $c$-vectors to denote a maximal green sequence because we can use the $c$-vector corresponding to vertex $k$ to represent mutation at vertex $k$.\\
\indent In the following theorem by the first author an equivalent definition of maximal green sequences was introduced. To understand more about the wall-and-chamber structure we suggest that the reader reads \cite{IOTW15}, \cite{GHKK14} or \cite{BST17}.
\begin{theorem}
\cite{I17} Let $\Lambda$ be a finite dimensional hereditary algebra over a field $K$. Let $\beta_1,\cdots,\beta_m\in \mathbb{N}^n$ be any finite sequence of nonzero, nonnegative integer vectors. Then the following are equivalent.
\begin{enumerate}
\item[(a)] There is a generic green path $\gamma:\mathbb{R}\to\mathbb{R}^n$ which crosses the walls $D(M_i)$, $i=1,\cdots,m$ in that order, and no other walls, so that $\dim M_i=\beta_i$ for all $i$.
\item[(b)] There is a maximal green sequence for $\Lambda$ of length $m$ whose $i$th mutation is at the $c$-vector $\beta_i$. 
\end{enumerate}
\end{theorem}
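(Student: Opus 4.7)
The plan is to prove the equivalence by identifying, for each hereditary algebra $\Lambda$, the wall-and-chamber structure of $\mathbb{R}^n$ (with $n$ the number of simples) with the combinatorial data of seeds and mutations. First I would recall that for every module $M$ the wall $D(M)$ is the locus of linear stability conditions on which $M$ is semistable, and that $D(M)$ has codimension one precisely when $M$ is a brick. The chambers of this decomposition are in bijection with the (functorially finite) torsion classes, equivalently with seeds, and the all-positive chamber corresponds to the initial seed $\hat{Q}$ while the all-negative chamber corresponds to the totally-mutated seed.

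Second, I would establish (citing or re-proving the brick-labelling result, via $\tau$-tilting theory or the scattering-diagram description of Gross--Hacking--Keel--Kontsevich) that the positive $c$-vectors of a seed are exactly the dimension vectors of the bricks whose walls bound the corresponding chamber from above, and that a green mutation at vertex $k$ amounts to crossing the wall $D(M_k)$ from the ``stable'' side to the ``unstable'' side, with $\dim M_k$ equal to the $c$-vector of that mutation. This translation is the technical core and the place where most of the work sits.

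With this dictionary in hand the two implications are formal. For (a) $\Rightarrow$ (b): a generic green path starts in the all-positive chamber and, because it is green, each wall crossing $D(M_i)$ is from the $M_i$-semistable side to the opposite side; reading off $\dim M_i=\beta_i$ at each crossing produces the sequence of $c$-vectors of an MGS, and terminating in the all-negative chamber makes the sequence maximal. For (b) $\Rightarrow$ (a): given an MGS with $c$-vectors $\beta_i$, one builds a piecewise-linear path chamber by chamber, crossing $D(M_i)$ transversally in the direction dictated by the green condition; a small generic perturbation inside each chamber avoids higher-codimension wall intersections and guarantees that no extraneous wall is crossed.

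The main obstacle is the brick-labelling step: proving rigorously that the $c$-vectors of an arbitrary seed coincide with the dimension vectors of the bricks labelling the facets of its chamber, and that this labelling is compatible with mutation in both the combinatorial and geometric senses. Once that identification is secured, the equivalence reduces to tracking signs of $c$-vectors against the ``green'' condition on $\gamma$, which is essentially bookkeeping.
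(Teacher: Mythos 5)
This statement is quoted in the paper from \cite{I17} as background; the paper itself offers no proof, so there is no internal argument to compare yours against. Judged on its own, your outline follows the standard stability-condition dictionary (walls $D(M)$ for bricks, chambers $\leftrightarrow$ seeds, green mutation $=$ wall crossing at the $c$-vector), which is indeed the framework in which the cited result and related work such as \cite{BST17} and \cite{IOTW15} live. But as a proof it has a genuine gap, and you name it yourself: the brick-labelling/mutation-compatibility step --- that the positive $c$-vectors of a reachable seed are exactly the dimension vectors of the bricks whose walls bound its chamber, and that crossing the facet $D(M_k)$ takes you to the chamber of the seed mutated at $k$ --- is precisely the content of the theorem, not a reduction of it. Deferring it to ``citing or re-proving'' means the argument is circular unless you pin down an independent source (e.g.\ the $\tau$-tilting/torsion-class correspondence plus the $c$-vector theorem), and once you do cite such results the two ``formal'' implications are essentially the cited theorem restated.

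There is also a second, less visible issue specific to the hereditary wild case that your sketch glosses over. For (a) $\Rightarrow$ (b) you need to know that a generic path crossing only finitely many walls never leaves the union of cluster chambers: in wild type there are infinitely many walls, they accumulate on the boundary of the cluster fan, and there are regions (the non-rigid/imaginary cone) not covered by chambers at all, so ``read off the chambers visited'' requires an argument that each crossed wall is a facet shared by two cluster chambers and that the walls are locally finite along the path. Similarly, in (b) $\Rightarrow$ (a) the claim that a small perturbation ``avoids extraneous walls'' needs a Baire-type or local-finiteness argument on the shared facet, since infinitely many other walls may meet it. These points are exactly what the machinery of \cite{I17} (and the semi-invariant picture results it relies on) is built to handle, so your proposal is a correct roadmap but not yet a proof.
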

\indent Positive $c$-vectors are dimension vectors of elements of simple-minded collections. Such elements are all bricks. That is, all $c$-vectors are Schur. However we can indeed prove more. They are in fact real as well.\\
\begin{lemma}
Let $k$ be an algebraically closed field. Let $\Lambda$ be a hereditary algebra over $k$. Then any $c$-vector $c$ that appears in any MGS is a real Schur root.
\end{lemma}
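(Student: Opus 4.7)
The plan is to combine the wall-crossing characterization of MGSs from the theorem just cited with standard facts about the wall-and-chamber structure of a hereditary algebra. By sign-coherence of $c$-vectors, it suffices to treat positive $c$-vectors $c$, since a vector is a real Schur root if and only if its negation is. The cited theorem then identifies $c = \dim M$ for some brick $M$ whose wall $D(M)$ is crossed transversely by a generic green path $\gamma$, with no other walls crossed at that moment. The brick property already supplies $\mathrm{End}_\Lambda M = k$; what remains is to show $\mathrm{Ext}^1_\Lambda(M,M) = 0$, since together these imply $\langle \dim M, \dim M\rangle = 1$ for the Euler form, i.e.\ $\dim M$ is a real Schur root.

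To establish the rigidity of $M$, I would argue by contradiction. Suppose $\mathrm{Ext}^1_\Lambda(M,M) \neq 0$. Over an algebraically closed field this forces $\dim M$ to be a non-real (isotropic or imaginary) Schur root, so there is in fact a positive-dimensional family of bricks of dimension vector $\dim M$ and abundant non-split self-extensions. In the wall-and-chamber picture for $\Lambda$ (as developed in \cite{IOTW15} and \cite{BST17}), the wall $D(M)$ of such a non-real Schur brick is not a stand-alone chamber-separating wall but is accumulated by, and contained in the closure of, infinitely many walls $D(M')$ coming from exceptional sub- and quotient bricks appearing in filtrations of $M$. Consequently any generic transverse crossing of $D(M)$ must also cross infinitely many of these neighboring walls, contradicting the ``no other walls'' clause of the cited theorem. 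Hence $M$ is exceptional and $c$ is a real Schur root.

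The main obstacle is making the wall-accumulation claim rigorous: one must show concretely that for a non-real Schur brick $M$ every neighborhood in stability space of a generic point of $D(M)$ meets infinitely many other walls. I would carry this out by invoking the semi-invariant domain / scattering diagram description of walls in \cite{IOTW15} and \cite{BST17}, using that the moduli of semistable objects with non-real Schur dimension vector is positive-dimensional and that its exceptional subquotients each contribute their own walls flanking $D(M)$. An alternative, more algebraic route would be to extend the Speyer--Thomas classification of $c$-vectors as real Schur roots from the acyclic quiver case to general hereditary algebras via mutation invariance; but the wall-crossing argument is the most direct given the formulation of the cited theorem.
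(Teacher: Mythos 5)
Your reduction is aimed at the right target (Schur-ness is immediate, and everything comes down to $\mathrm{Ext}^1_\Lambda(M,M)=0$), but the argument has a genuine gap exactly where you flag it: the wall-accumulation claim is not an off-the-shelf fact you can cite from \cite{IOTW15} or \cite{BST17} in the generality you need (arbitrary wild hereditary algebra, arbitrary generic point of $D(M)$), and the mechanism you sketch for proving it does not work as stated. The walls $D(M')$ attached to exceptional sub- or quotient bricks $M'$ of $M$ lie in the hyperplanes orthogonal to $\dim M'$; at a generic point $x$ of $D(M)$ one has $x\cdot\dim M'\neq 0$, so those walls neither pass through nor accumulate at the crossing point, and they cannot ``flank'' $D(M)$ there. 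Genuine accumulation (as for the real-root walls piling up on the null-root wall of the Kronecker quiver) comes from infinitely many bricks whose projectivized dimension vectors converge toward that of $M$, and producing such a family near every generic point of $D(M)$ for an arbitrary non-rigid brick is itself a substantive theorem, not a remark. Even granting accumulation, you would still need to show that the nearby walls locally separate the two sides of $D(M)$ along your path, since the cited theorem only forbids the path from crossing other walls, not from having them nearby. As written, the proof therefore reduces the lemma to an unproved geometric statement that is at least as hard as the lemma itself; your fallback suggestion (extending Speyer--Thomas) is closer to a complete argument but is not carried out either.

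For contrast, the paper's proof is purely algebraic and inductive along the green sequence: assuming all $c$-vectors of some $C$-matrix are real Schur roots, it examines the two ways a mutation could produce a new vector $\pm w'=\pm(w\mp kv)$, interprets the negative $c$-vectors as dimension vectors of a simple-minded collection, and uses the Koenig--Yang mutation formula \cite{KY12} together with Prop.~6.4 of \cite{KQ15} to pin down the relevant $\mathrm{Hom}$ and $\mathrm{Ext}^1$ dimensions; a direct bilinear expansion then gives $\langle w',w'\rangle=1$, so no non-real root can ever appear. If you want to salvage your geometric route, you would need to prove the accumulation statement rigorously (essentially the assertion that finite generic green paths cross only rigid-brick walls), which is a stronger result than the lemma; otherwise the Euler-form computation along mutations of simple-minded collections is the more economical path.
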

\begin{proof}
\indent Since the simples of $\Lambda$ are all exceptional if the lemma were incorrect then there must be some $c$-matrix in the MGS, $C$ such that all columns of $C$ are real Schur roots while one green mutation can somehow generate a root that isn't real. Here there can only be two cases, namely some mutation performed on $-v$ caused some $-w$ to be transformed into $-w'=-w-kv$ which isn't real, some mutation performed on $-v$ caused some $+w$ to be transformed into $w'=w-kv$ which isn't real. In the second case $w'$ may be positive or negative.\\
\indent For an arbitrary $c$-vector $v$ let $M_v$ be the brick such that $v$ is the dimension vector of $M_v$. In this case $\langle v,v\rangle=1-dim Ext^1(M_v,M_v)$. Hence $v$ being real is equivalent to $\langle v,v\rangle=1$.\\
\indent \textbf{Case 1}: Assume that some mutation performed on $-v$ caused some $-w$ to be transformed into $-w'=-w-kv$ which isn't real. $\langle w', w'\rangle = \langle w, w\rangle + k\langle v,w\rangle + k\langle w,v\rangle + k^2\langle v,v\rangle$. Since $Hom(M_v, M_w) = Hom(M_w, M_v) = 0$ due to $M_v$, $M_w$ being two elements in a simple-minded collection and $v, w$ are both real  $\langle w', w'\rangle = k^2+1-k\, dim Ext^1(M_v, M_w) - k\, dim Ext^1(M_w, M_v)$. Using properties of simple-minded collections $dim Ext^1(M_w, M_v)  = k$. Using Prop 6.4 in \cite{KQ15} we can see that $Ext^1(M_v, M_w)  = 0$, Hence $\langle w', w'\rangle = 1$. $w'$ is real.\\
\indent \textbf{Case 2}: Assume that some mutation performed on $-v$ caused some $w$ to be transformed into $-w'=w-kv$ which isn't real. Using properties of simple-minded collections it is obvious that $Ext^1(M_v, M_w) = Hom(M_v, M_w) = 0$. Regardless of whether $w'$ is positive or negative $\langle w', w'\rangle = \langle w, w\rangle - k\langle v,w\rangle 0 k\langle w,v\rangle + k^2\langle v,v\rangle = k^2+1 -k\,dim Hom(M_w, M_v) + k\,Ext^1(M_w, M_v)$. Using properties of simple-minded collections $dim Hom(M_w, M_v)  = k$. Using Prop 6.4 in \cite{KQ15} we can see that $Ext^1(M_w, M_v)  = 0$, Hence $\langle w', w'\rangle = 1$. $w'$ is real.\\
\indent The assumption has been refuted. Any $c$-vector $c$ that appears in any MGS is a real Schur root.
\end{proof}
\indent There is also a result we will need to use to prove \ref{T1}.
\begin{lemma}\label{L}
\indent If $-c_1, -c_2$ are negative $c$-vectors in $C$-matrix $C'$ in an MGS, $c_1$ and $c_2$ are dimension vectors of exceptional modules $M_1$ and $M_2$. If $dim Ext^1(M_1, M_2) > 1$ then the mutation on $C'$ must not be done on $M_2$.
\end{lemma}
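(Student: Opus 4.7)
The plan is to proceed by contradiction, in parallel with Case~1 of the preceding lemma. Suppose the mutation at $C'$ in the MGS is performed on $M_2$, i.e.\ at the column whose $c$-vector is $-c_2$. The sign-coherent $C$-matrix mutation formula transforms the column with $-c_1$ into $-c_1-kc_2$. By the identification of the SMC-quiver $B$-matrix entries with Ext-dimensions used in Case~1 (the ``properties of simple-minded collections''), this mutation coefficient satisfies $k=\dim\mathrm{Ext}^1(M_1,M_2)$, while Prop~6.4 of \cite{KQ15}, applied to the shifted SMC pair $(M_1[-1],M_2[-1])$ exactly as in Case~1, simultaneously gives $\mathrm{Ext}^1(M_2,M_1)=0$. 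By hypothesis, $k>1$.

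Next I would analyze the successor matrix $C''$. By the previous lemma, its new column $-(c_1+kc_2)$ is a real Schur root, so there is a unique brick $N$ of that dimension, which SMC mutation theory presents as the universal extension $0\to M_2^k\to N\to M_1\to 0$. Standard long exact sequence calculations applied to this SES, together with the Ext-vanishing from Prop~6.4, give $\dim\mathrm{Hom}(M_2,N)=k$ and $\mathrm{Ext}^1(M_2,N)=0$. Consequently, in the SMC associated with $C''$ the new pair $(M_2,N[-1])$ carries governing $D^b$-Ext of dimension exactly $k$, mirroring the configuration of $(M_1[-1],M_2[-1])$ in $C'$ up to a swap of roles and a shift of one object.

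The contradiction I would extract is that iterating this analysis forces non-termination of the MGS. The next available green mutation between the two columns in question must be at the column containing $-(c_1+kc_2)$. Tracking the restriction of the $c$-matrix to these two columns, each mutation adds or subtracts $k$ times the other column, so the dimension vectors appearing in them satisfy a Chebyshev-like recurrence $d_{n+1}=kd_n-d_{n-1}$. For $k\ge 2$ this recurrence grows without bound, so the two columns cannot simultaneously become positive, and the MGS cannot terminate, contradicting the hypothesis that $C'$ sits inside a complete MGS.

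The main obstacle will be making the ``two-column subdynamics'' argument rigorous in the presence of other columns of $C'$ whose mutations could in principle interact with the columns attached to $M_1$ and $M_2$. I expect the cleanest route is either (a) to verify that mutations at other columns preserve or monotonically grow the dimensions appearing in the $M_1,M_2$ columns (so they cannot ``rescue'' termination), reducing the problem to the $k$-Kronecker case already encountered in the examples, or (b) to re-apply Prop~6.4 of \cite{KQ15} directly in the SMC of $C''$ to the pair $(M_2,N[-1])$ and deduce $k\le 1$ in one step, yielding the contradiction immediately.
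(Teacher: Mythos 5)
Your opening step coincides with the paper's: assuming the mutation at $C'$ is performed on the column $-c_2$, the Koenig--Yang description of mutation of simple-minded collections (\cite{KY12}) turns the column $-c_1$ into $-(c_1+kc_2)$ with $k>1$. The paper then finishes in one stroke: such a coefficient forces at least two arrows between the two corresponding green vertices in the current exchange quiver, and it appeals to a result of \cite{BHIT15} to conclude that this mutation cannot occur inside a maximal green sequence. Your proposal instead tries to manufacture the contradiction from scratch, and that is where there is a genuine gap.

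Route (a), the ``two-column Chebyshev dynamics,'' is exactly the hard point, and you leave it open yourself. After the bad mutation there is no reason the next mutation affecting these two columns is at $-(c_1+kc_2)$: mutations at the other $n-2$ columns may be interleaved, and they can change both of these columns as well as the relevant entry of the exchange matrix, so the recurrence $d_{n+1}=kd_n-d_{n-1}$ is not established. What you are implicitly asking for is a reduction to the rank-two ($k$-Kronecker) picture inside the larger quiver, and that reduction is essentially the content of the result the paper cites from \cite{BHIT15}; without it the non-termination claim is unproven. Route (b) cannot work as stated: Prop.~6.4 of \cite{KQ15}, in the way it is used in the preceding realness lemma, only excludes $\operatorname{Ext}^1$ in both directions between simples of a simple-minded collection; it gives no bound on $\dim\operatorname{Ext}^1$ in a single direction, and indeed no such bound exists --- for the $k$-Kronecker quiver the initial SMC already has $\dim\operatorname{Ext}^1(S_1,S_2)=k\ge 2$. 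Moreover, mutating an SMC at $M_2$ always produces a legitimate SMC, so no one-step contradiction can come from the SMC axioms alone; the obstruction is global, coming from the termination (maximality) of the green sequence, which is precisely the input the paper imports from \cite{BHIT15} and which your argument is missing.
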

\begin{proof}
Assume that $-c_1$ is the $i$-th column and $-c_2$ is the $j$-th colu,n. According to \cite{KY12} using the definition of left mutations of simple-minded collections if $dim Ext^1(M_1, M_2) > 1$ then the mutation on $-c_2$ would cause $-c_1$ to be transformed into $-c_1-kc_2$ with $k>1$ because which could only happen if there are multiple edges from $i$ to $j$. Due to \cite{BHIT15} this was impossible.\\
\end{proof}
\subsection{MGS-finiteness}
\indent In this subsection let's review the basics about what kind of quivers have finitely many maximal green sequences.
\begin{definition}
A quiver $Q$ is \textit{MGS-finite} if $Q$ has finitely many maximal green sequences. Any quiver that isn't MGS-finite is \textit{MGS-infinite}.
\end{definition}
\indent Here are some results that are either already known or easily proven about MGS-finiteness of quivers.
\begin{theorem}
\cite{BDP13}Any acyclic quiver $Q$ of finite type or tame type as well as any acyclic quiver $Q$ of wild type with three vertices are MGS-finite.
\end{theorem}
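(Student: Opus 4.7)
The plan is to treat the three cases separately, since each uses quite different ideas, and then assemble them into the combined statement. The finite-type case is the easiest: I would first note that, by the Fomin--Zelevinsky classification, the number of reachable cluster seeds (equivalently, reachable extended exchange matrices $\hat{B}$) is finite. Then I would show that the $C$-matrices occurring along a single MGS are pairwise distinct. This rests on sign-coherence together with the fact that a green mutation replaces a positive $c$-vector by a negative one while modifying other columns only by adding nonnegative multiples of the negated column, so the ``profile of negative $c$-vectors'' strictly grows along the sequence. Consequently every MGS is a sequence of distinct $C$-matrices drawn from a finite set, and at each step there are at most $n$ choices of mutation, which gives finiteness of the set of MGSs.

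For the tame case, the strategy is to exploit the lemma proved above: every $c$-vector occurring in any MGS is a real Schur root. In a tame hereditary algebra the real Schur roots are organised by the Auslander--Reiten quiver into preprojective, preinjective and non-homogeneous regular components, and each component admits a well-understood combinatorial parametrisation. Using this structure together with the Target-before-Source Theorem of \cite{BHIT15}, I would show that any generic green path $\gamma:\mathbb{R}\to\mathbb{R}^n$ can only cross finitely many walls $D(M)$ before $\gamma$ points into the fully-red chamber. This bounds the length of any MGS uniformly; combined with the fact that at each stage only finitely many real Schur roots can be the $c$-vector of the next mutation, this forces the total number of MGSs to be finite.

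For the wild three-vertex case I would follow the approach of \cite{BDP13}. With only three indices, the possible sign patterns of columns of $C$ are highly constrained, and one can introduce a combinatorial potential on $C$ -- built, for instance, from the entries of $C$ or from values of the Euler form on the brick-labels of its columns -- that is monotone along green mutations. Together with the Schur property of $c$-vectors, this potential argument bounds the set of $C$-matrices reachable during an MGS and thereby shows that only finitely many MGSs can occur. The key feature is that in three vertices the mutation rules for $C$ simplify enough to make an explicit induction on this potential work.

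The hardest step is the tame case: one must control the interplay between the tube structure of the regular part of the module category and the order in which $c$-vectors may appear in a green sequence, ruling out the possibility of ``cycling'' arbitrarily many times through regular components before the sequence terminates. The wild three-vertex argument is delicate but tractable because of the low rank, while the finite-type case is essentially formal once the finiteness of the cluster complex is invoked.
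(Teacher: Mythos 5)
The paper does not actually prove this statement; it is quoted with a citation to \cite{BDP13} (the tame case being really the main theorem of \cite{BHIT15}), so your proposal has to be measured against those references rather than against an argument in the text. Measured that way, it is an outline rather than a proof, and the one case you argue in detail contains a concretely false step. In the finite-type case you justify the distinctness of the $C$-matrices along an MGS by claiming that the ``profile of negative $c$-vectors'' strictly grows under green mutation. That is not true: red vertices can turn green again. Already for the quiver $1\to 2$ the maximal green sequence $(2,1,2)$ has vertex $2$ green, then red after $\mu_2$, then green again after $\mu_1$. The conclusion you want --- that no $C$-matrix repeats along a green sequence, i.e.\ that the oriented exchange graph has no oriented cycles --- is correct, but it is itself a nontrivial fact (for instance via the identification of green mutation with strict increase of functorially finite torsion classes, or the arguments of \cite{BDP13}); it does not follow from sign-coherence plus your monotonicity claim. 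Once that is supplied, your counting (finitely many seeds in finite type, no repetition, at most $n$ choices per step) does close that case, but as written the key lemma is unsupported.

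For the tame and the wild three-vertex cases your text is a plan that defers exactly the hard content. In the tame case the entire difficulty is to bound how many walls a generic green path can cross: the walls of regular modules accumulate along the hyperplane $\delta^\perp$ of the null root, and one must show that a green path meets only finitely many of them before entering the all-red region; citing the AR-quiver structure and the Target-before-Source Theorem does not by itself exclude crossing infinitely many exceptional regular walls, and this accumulation argument is the substance of \cite{BHIT15}, not a routine consequence of the real-Schur-root lemma. Similarly, in the wild rank-$3$ case, saying one would ``introduce a combinatorial potential, for instance built from the entries of $C$,'' names the shape of the argument of \cite{BDP13} without exhibiting the potential or proving its monotonicity, which is where all the work lies. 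So none of the three cases is established by the proposal as it stands; the finite-type case needs a corrected monotonicity argument, and the other two need the actual quantitative bounds from the cited papers.
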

\begin{theorem}
Any quiver $Q$ mutation equivalent to an acyclic quiver of finite or tame type is MGS-finite.
\end{theorem}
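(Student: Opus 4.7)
The plan is to bootstrap from the preceding theorem (MGS-finiteness of acyclic representatives of finite or tame type) to the full mutation class, using the mutation-invariance of the cluster category $\mathcal{C}_Q$ together with the reddening-sequence bound of \cite{BHIT15}.

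For the finite type case, suppose $Q$ is mutation equivalent to an acyclic quiver $Q'$ of finite type. Then $\mathcal{C}_Q \simeq \mathcal{C}_{Q'}$ has only finitely many indecomposable objects and hence only finitely many basic cluster-tilting objects, so the oriented exchange graph of $Q$ (whose vertices are the seeds reachable from $\hat{Q}$ and whose edges are single mutations) is a finite directed graph. Every MGS of $Q$ corresponds to a directed simple path in this graph from $\hat{Q}$ to its coframed version, and a finite directed graph admits only finitely many such paths; this bounds the number of MGSs and gives MGS-finiteness.

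For the tame type case, the exchange graph is no longer finite (since $\mathcal{C}_Q$ has infinitely many indecomposables), so a direct counting argument is unavailable. Here I would appeal to \cite{BHIT15}, which proves that any quiver mutation equivalent to an acyclic tame quiver has only finitely many $k$-reddening sequences for each $k\geq 1$. Since every maximal green sequence is in particular a $1$-reddening sequence, MGS-finiteness of $Q$ follows immediately.

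The main obstacle I anticipate is the tame case: one must verify that the \cite{BHIT15} bound really applies uniformly to every quiver in the mutation class of $Q'$, not merely to the acyclic representative itself. This is a delicate point because, by work of Muller, even the existence of an MGS is not preserved under mutation in general, so finiteness counts cannot be transferred across a mutation sequence by any naive conjugation argument. The natural workaround is to use the mutation-invariant scattering diagram or wall-and-chamber structure underlying \cite{I17}: once the relevant reddening data are phrased in these intrinsic terms, the tame-type bound is manifestly a property of the mutation class and applies to $Q$ directly. The finite-type case should present no difficulty beyond the finiteness of the exchange graph recalled above.
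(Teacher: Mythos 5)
Your proposal is correct in substance, but only the tame half coincides with the paper, and the finite-type half rests on an unstated fact that carries the whole weight of the argument. For the finite-type case the paper does not count paths in the exchange graph: it applies the Rotation Lemma of \cite{BHIT15} to turn any MGS of $Q$ into a $k$-reddening sequence of the acyclic finite-type representative (with $k$ fixed by the mutation sequence joining them), and then uses the fact that a $k$-reddening sequence can repeat a cluster at most $k+1$ times together with the finiteness of clusters in finite type. Your direct count is arguably cleaner, but it silently assumes that an MGS traces a \emph{simple} path, i.e.\ that a green sequence never revisits a seed of $\hat{Q}$; without that, finiteness of the oriented exchange graph bounds nothing, since a finite directed graph containing an oriented cycle carries infinitely many directed walks. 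The assertion is true --- the oriented exchange graph has no oriented cycles (in finite type it is the Hasse quiver of a lattice, cf.\ \cite{BDP13}), and it is precisely the degenerate case of the cluster-repetition bound the paper quotes from \cite{BHIT15} and \cite{IZ17} --- but it should be cited or proved rather than taken for granted, since it is exactly the point where a naive ``finite graph, hence finitely many paths'' argument could fail. For the tame case the paper simply observes that the statement for the whole mutation class is already proved in \cite{BHIT15}, which is what you do as well; your worry about transporting the bound across the mutation class (motivated by Muller's example on non-invariance of the \emph{existence} of an MGS) is reasonable, but the standard resolution is again the Rotation Lemma, which converts an MGS of any quiver in the class into a reddening-type sequence of the acyclic representative, so the proposed detour through scattering diagrams is unnecessary and, as sketched, too vague to serve as a substitute. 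Finally, check the indexing convention before calling an MGS a ``$1$-reddening sequence''; depending on the definition in \cite{BHIT15} and \cite{IZ17} it is the $k=0$ case, though this does not affect the conclusion.
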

\begin{proof}
Due to \cite{BHIT15} the result is already proven in the mutation-equivalent to tame type case. For the mutation-equivalent to finite type case using the Rotation Lemma in \cite{BHIT15} it is obvious that any MGS in such a quiver must be an $k$-reddening sequence of an acyclic quiver of finite type for a fixed $k$. There are only finitely many such sequences because a $k$-reddening sequence can only repeat a cluster $k+1$ times as shown in \cite{BHIT15} and \cite{IZ17} and in an acyclic quiver of finite type there are only finitely many cluster-tilting objects and hence clusters.
\end{proof}
\begin{lemma}
If $Q$ is a quiver that isn't connected, $Q^1$, $Q^2$, $\cdots$ $Q^n$ are its connected components. Each $Q^i$ is MGS-finite if and only if $Q$ is MGS-finite.
\end{lemma}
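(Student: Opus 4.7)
The plan is to exploit the fact that if $Q$ has connected components $Q^1,\ldots,Q^n$ (as an undirected graph), then mutation at any vertex $k\in Q^i$ only alters arrows incident to $k$, all of which lie in $Q^i$. Consequently the framed quiver $\hat{Q}$ is the disjoint union $\hat{Q}^1\sqcup\cdots\sqcup\hat{Q}^n$, and this decomposition is preserved under every mutation. In particular, whether a vertex $k\in Q^i$ is green at some stage of a mutation sequence depends only on the mutations performed so far at vertices of $Q^i$.

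The key structural claim I would establish is a bijection between MGSs of $Q$ and tuples $(\mathbf{i}^1,\ldots,\mathbf{i}^n, \sigma)$, where $\mathbf{i}^j$ is an MGS of $Q^j$ and $\sigma$ is a shuffle (interleaving) of these sequences into one. The forward map sends an MGS $\mathbf{i}$ of $Q$ to the tuple whose $j$th component is the subsequence of $\mathbf{i}$ consisting of mutations at vertices of $Q^j$; by the independence observation above, this subsequence is a green sequence of $Q^j$, and it must be maximal since no vertex of $Q^j$ remains green after $\mathbf{i}$ terminates. Conversely, given MGSs $\mathbf{i}^j$ for each $Q^j$ and any shuffle $\sigma$, at each step of the shuffled sequence the vertex being mutated is in some $Q^j$ and is green in $Q^j$ at the appropriate partial stage, hence green in $Q$; once the shuffle is exhausted, no vertex in any component is green, so the resulting sequence is an MGS of $Q$.

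With this bijection in hand the equivalence follows quickly. If each $Q^j$ is MGS-finite, then there are finitely many choices of $\mathbf{i}^j$, each of bounded length, so the set of shuffles is finite, hence $Q$ has finitely many MGSs. Conversely, if some $Q^j$ admits infinitely many MGSs, fix any MGS for each other component and concatenate; this produces infinitely many distinct MGSs of $Q$, so $Q$ is MGS-infinite.

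The main obstacle — really the only nontrivial point — is verifying the independence of components carefully at the level of the framed quiver: one must check that the frozen vertices $i'$ attached to $Q^j$ never acquire arrows to or from vertices of $Q^{j'}$ under mutation, which is immediate from the mutation rule since such arrows would require a path through the non-frozen part crossing components. Once that is pinned down the rest is bookkeeping about shuffles.
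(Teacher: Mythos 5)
Your proposal is correct and follows essentially the same route as the paper: the paper's proof also rests on the observation that every MGS of $Q$ is a shuffle of MGSs of the components $Q^i$ (obtained by restricting to mutations supported on each $Q^i_0$), with finiteness in both directions then following from counting component MGSs and interleavings. Your version simply spells out the component-independence of the framed quiver under mutation more carefully than the paper does.
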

\begin{proof}
\indent Any MGS of $Q$ is essentially formed from taking an MGS $w_i$ of $Q^i$ for each $i$ and then put these mutations together such that the order of elements in each $w_i$ is preserved.\\ 
\indent Since we can obtain all MGSs of $Q^i$ by deleting all $c$-vectors not supported on $Q^i_0$ from all MGSs of $Q$ it is easy to see that if $Q$ is MGS-finite so is $Q^i$ for any $i$.\\
\indent On the other hand if all $Q^i$s are MGS-finite it is easy to see that so is $Q$ because the set of admissible $c$-vectors of $Q$ is the union of admissible $c$-vectors in MGSs of $Q^i$ all of which are finite.\\
\end{proof}
\section{The acyclic case}
\indent Now we need some basic definitions in order to describe and prove the results.
\begin{definition}
A quiver with at least one multiple edge is \textit{ME-ful}. Otherwise it is \textit{ME-free}.
\end{definition}
\begin{definition}
A \textit{multiple edges-free (ME-free)} version of a quiver $Q$ is produced by removing all multiple edges from $Q$ while retaining single edges and vertices.
\end{definition}
\indent For example the ME-free version of the $m$-Kronecker quiver for any $m$ is the quiver $A_1\times A_1$, namely the quiver with two vertices and no arrows.\\
\indent In this section we will use the fact that a path in the semi-invariant picture of $Q$ is also a path in the semi-invariant picture of its ME-free version, $Q'$. Since the definition of whether a path is green and generic differ in semi-invariant pictures of different quivers we will use the concept of \textit{strong genetic green paths} to exclude problematic cases.
\begin{definition}
Let $Q$ be an ME-ful quiver, $Q'$ be its ME-free version. A path in the semi-invariant pictures of $Q$ and $Q'$ is \textit{strong generic green} if it is a generic green path in both pictures.
\end{definition}
\begin{definition}
Let $Q$ be an ME-ful quiver.
\begin{enumerate}
\item A $c$-vector in $Q$ is \textit{ME-free} if its support is ME-free. Any $c$-vector in $Q$ that isn't ME-free is \textit{ME-ful}.
\item An MGS in $Q$ is \textit{ME-free} if all its $c$-vectors are ME-free. An MGS of $Q$ that isn't ME-free is \textit{ME-ful}.
\item A generic green path in the semi-invariant picture of $Q$ is \textit{ME-free} if it crosses no wall corresponding to an ME-ful $c$-vector. A generic green path in the semi-invariant picture of $Q$ that isn't ME-free is \textit{ME-ful}.
\item A module of $kQ$ is \textit{ME-free/ME-ful} if its $c$-vector is ME-free/ME-ful.
\end{enumerate}
\end{definition}
\indent Note that if an MGS is ME-free all $c$-vectors in all $c$-matrices in it including those that aren't mutated must be ME-free.\\
\indent If $Q$ is an ME-ful quiver and $Q'$ is its ME-free version it does not technically make sense to discuss ME-fulness of any module of $kQ'$. Here we are going to use the same definition we used in defining ME-fulness of vectors and MGSs of $Q$.
\begin{definition}
Let $Q$ be an ME-ful quiver and let $Q'$ be its ME-free version.
\begin{enumerate}
\item A $c$-vector in $Q'$ is \textit{Q-ME-free} if it is ME-free when considered as a dimension vector of $Q$. Any $c$-vector in $Q'$ that isn't $Q$-ME-free is \textit{Q-ME-ful}.
\item An MGS in $Q'$ is \textit{Q-ME-free} if all its $c$-vectors are $Q$-ME-free. An MGS of $Q'$ that isn't $Q$-ME-free is \textit{Q-ME-ful}.
\item A generic green path in the semi-invariant picture of $Q'$ is \textit{Q-ME-free} if it crosses no wall corresponding to a $Q$-ME-ful $c$-vector. A generic green path in the semi-invariant picture of $Q'$ that isn't $Q$-ME-free is \textit{Q-ME-ful}.
\item A strongly generic green path in the semi-invariant picture of $Q'$ is \textit{strongly $Q$-ME-free} if it is $Q$-ME-free and does not cross any wall corresponding to a $Q$-ME-ful $c$-vector in the semi-invariant picture of $Q$. A generic green path in the semi-invariant picture of $Q'$ that isn't strongly $Q$-ME-free is \textit{weakly Q-ME-ful}.
\item A module of $kQ'$ is \textit{Q-ME-free/Q-ME-ful} if its $c$-vector is $Q$-ME-free/$Q$-ME-ful.
\end{enumerate}
\end{definition}
\indent We will sometimes abuse the notations and use the term $Q$-ME-free for $c$-vectors/MGSs of $Q$. In this case they are just ME-free $c$-vectors/MGSs.
\begin{definition}
If $Q$ and $Q'$ have the same number of vertices, a GS $w$ of $kQ$ \textit{is equivalent to} a GS $w'$ of $kQ'$ if $w$ and $w'$ mutates on the same sequence of $c$-vectors and start from the same $c$-matrix up to permutations. 
\end{definition}
\indent Using the equivalence it makes sense to identify certain MGSs of $Q$ and $Q'$. It is in this sense that we claim and prove that all MGSs of an ME-ful quiver $Q$ are MGSs of its ME-free version, $Q'$.
\indent In order to state a corollary we also need three more definitions.
\begin{definition}
The \textit{skeleton} of a quiver $Q$ is produced by replacing all multiple edges from $Q$ by single edges with the sources and targets unchanged.
\end{definition}
\indent For example the ME-free version of the $m$-Kronecker quiver for any $m$ is the quiver $A_2$.
\begin{definition}
$Q$ and $Q'$ are quivers. If they have the same ME-free version and the same skeleton then they are \textit{ME-equivalent}.
\end{definition}
\begin{definition}
If every MGS of $Q$ corresponds to some MGS of $Q'$ and vice versa then $Q$ and $Q'$ are MGS-equivalent. 
\end{definition}
\begin{lemma}
\indent Let $Q$ be a quiver and $Q'$ be its ME-free version. The following holds:\label{L2}
\begin{enumerate}
\item The set of $Q$-ME-free $c$-vectors of $Q$ and $Q'$ coincide.
\item If $Q$ is an ME-ful quiver then for any positive $Q$-ME-ful vector $c\in\mathbb{R}^n$ $\langle M,M\rangle_{kQ} - \langle M,M\rangle_{kQ'} \leq -2$.
\item If $Q$ is an ME-ful quiver. Then any of the $Q$-ME-ful $c$-vectors can not be a dimension vector of an exceptional modue for $Q'$. Any of the $Q$-ME-ful $c$-vectors of $Q'$ can not be a dimension vector of an exceptional module for $Q$.
\end{enumerate}
\end{lemma}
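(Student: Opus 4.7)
The plan is to dispatch (1) first by observing that $c$-vectors on an ME-free support only see the restriction of the quiver to that support, then establish (2) by a direct computation of the Euler form, and finally deduce (3) from (2) together with the previously established fact that every $c$-vector appearing in an MGS is a real Schur root.

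For (1), a $Q$-ME-free vector has support $S\subseteq Q_0$ avoiding every pair of vertices joined by a multiple edge of $Q$, so the induced subquivers $Q|_S$ and $Q'|_S$ coincide arrow-by-arrow. Any module $M$ with dimension vector supported on $S$ has the same $\operatorname{Hom}$ and $\operatorname{Ext}^1$ groups whether viewed over $kQ$ or $kQ'$, since only arrows with both endpoints in $S$ contribute. Thus $M$ is exceptional for $kQ$ iff it is exceptional for $kQ'$. Combining this with the preceding lemma (every $c$-vector arising in an MGS is a real Schur root, i.e.\ the dimension vector of an exceptional module) and the standard fact that for acyclic quivers every real Schur root does occur as a $c$-vector, the two sets of $Q$-ME-free $c$-vectors are both identified with the real Schur roots of $k(Q|_S)$ summed over ME-free supports $S$, and therefore coincide.

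For (2), write $k_{ij}$ for the number of arrows $i\to j$ in $Q$ and $k'_{ij}$ for the corresponding count in $Q'$. Because the ME-free operation preserves single edges and deletes every multiple-edge bundle entirely, $k_{ij}-k'_{ij}=k_{ij}\ge 2$ on multi-pairs and $0$ elsewhere. Then the standard Euler-form expression gives
$$\langle M,M\rangle_{kQ}-\langle M,M\rangle_{kQ'} \;=\; -\sum_{i,j}(k_{ij}-k'_{ij})\,d_i d_j,$$
and for a positive $Q$-ME-ful vector the support contains at least one multi-pair $(i,j)$ with $d_i,d_j\ge 1$, which alone contributes at most $-2$; all remaining summands are nonpositive, so the full difference is $\le -2$.

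Part (3) then follows directly: by the earlier lemma any $c$-vector is a real Schur root over its own hereditary algebra, hence satisfies $\langle c,c\rangle=1$ there. A $Q$-ME-ful $c$-vector of $Q$ thus gives $\langle c,c\rangle_{kQ'}\ge 3$ by (2), which rules out being an exceptional $kQ'$-module; symmetrically, a $Q$-ME-ful $c$-vector of $Q'$ yields $\langle c,c\rangle_{kQ}\le -1$, equally ruling out exceptionality for $kQ$. The main obstacle is the subtlety in (1): one has to ensure not merely that the module categories on $S$ agree but that a real Schur root on an ME-free support is genuinely realized as a $c$-vector of the ambient quiver on both sides. This is handled by invoking the identification of $c$-vectors with real Schur roots for acyclic (hereditary) quivers, which is the crux of how the preceding lemma enters the argument.
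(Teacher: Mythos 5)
Your proposal is correct and follows essentially the same route as the paper: compare the Euler forms of $kQ$ and $kQ'$, note they agree on ME-free supports and drop by at least $2$ on positive ME-ful integer vectors, and then use the identification of $c$-vectors with real Schur roots (so $\langle c,c\rangle=1$ over the quiver where $c$ is a $c$-vector) to get (3). Your treatment of (1) is in fact slightly more careful than the paper's, which only records equality of the Euler forms, whereas you also observe that $Q|_S=Q'|_S$ so the exceptional modules on an ME-free support literally coincide and invoke the acyclic $c$-vector/real-Schur-root correspondence; this fills in what the paper leaves implicit.
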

\begin{proof}
\indent For (1).Let the Euler matrices of $Q, Q''$ be $E = e_{ij}, E' = (e'_{ij})$ respectively. $\langle c,c\rangle_{kQ} = \langle c,c\rangle_{kQ'}$ because whenever $e_{ij}, e'_{ij}$ differ $c_i = 0$ or $c_j = 0$ leaving the term related to $(i,j)$ 0. Hence the set of $Q$-ME-free $c$-vectors of $Q, Q'$ corresponding to exceptional modules coincide.\\
\indent For (2) Assume that such a vector, $c$ exists. $\langle c,c\rangle_{kQ} = \langle c,c\rangle_{kQ'} = 1$. However the Euler matrix $E = (e_{ij})$ of $Q$ and the Euler matrix $E' = (e'_{ij})$ of $Q'$ differ in the sense that there exists some pair $(i,j)\in [n]$ such that $c_i\neq 0, c_j> 0$ and $0 = e'_{ij} > -2 \geq e_{ij}$. Since for any $k,l\in [n]$ $e'_{kl}\geq e_{kl}$ it is easy to see that $\langle c,c\rangle_{kQ'} > \langle c,c\rangle_{kQ}$ and that $\langle M,M\rangle_{kQ} - \langle M,M\rangle_{kQ'} \leq -2$.\\
\indent (3) is a consequence of (2) since $\langle M,M\rangle_{kQ}$ and $\langle M,M\rangle_{kQ'} $ can not both be 1.
\end{proof}
\begin{lemma}
Let $Q$ be an ME-ful quiver. Any MGS of an ME-ful quiver $Q$ must not contain any $Q$-ME-ful $c$-vector of $Q'$ or any vector $c$ which is an imaginary root of $Q'$.
\end{lemma}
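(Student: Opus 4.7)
My plan is to deduce this lemma as a direct consequence of the previous lemma (every $c$-vector appearing in an MGS is a real Schur root) together with parts (1) and (3) of Lemma~\ref{L2}. Since $Q$-ME-fulness depends only on the support of a vector, and the Euler form satisfies $\langle v,v\rangle=\langle -v,-v\rangle$, I may reduce to positive $c$-vectors by replacing any negative $c$-vector $-v$ appearing in a $C$-matrix by its absolute value $v$; whatever is proved for $v$ then transfers to $-v$.

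For the first assertion, let $v$ be a positive $c$-vector occurring in some $C$-matrix of an MGS of $Q$. The preceding lemma on realness gives that $v=\dim M_v$ for some exceptional $kQ$-module $M_v$, so in particular $\langle v,v\rangle_{kQ}=1$. Now part~(3) of Lemma~\ref{L2} asserts that no $Q$-ME-ful vector can be the dimension vector of an exceptional $kQ$-module. Therefore $v$ must be $Q$-ME-free, and the MGS of $Q$ contains no $Q$-ME-ful $c$-vector of $Q'$.

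For the second assertion, I would then invoke part~(1) of Lemma~\ref{L2}: having just shown $v$ is $Q$-ME-free, we obtain $\langle v,v\rangle_{kQ'}=\langle v,v\rangle_{kQ}=1$. Any imaginary root of $Q'$ would instead satisfy $\langle c,c\rangle_{kQ'}\leq 0$, so $v$ cannot be an imaginary root of $Q'$.

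The main obstacle here is essentially bookkeeping rather than substantive difficulty: one must apply parts (3) and (1) of Lemma~\ref{L2} in the correct order (first force $Q$-ME-freeness via (3), only then compute $\langle v,v\rangle_{kQ'}$ via (1)), and one must remember that ME-fulness is a statement about supports so the positive-vs-negative distinction among $c$-vectors is harmless. No additional input beyond the previous lemma and Lemma~\ref{L2} is needed.
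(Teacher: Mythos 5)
There is a genuine gap, and it sits in your second step. You quote Lemma~\ref{L2}(3) as saying that \emph{no} $Q$-ME-ful vector can be the dimension vector of an exceptional $kQ$-module, and from this you conclude that every $c$-vector $v$ appearing in an MGS of $Q$ is $Q$-ME-free. But Lemma~\ref{L2}(3) only says this for $Q$-ME-ful \emph{$c$-vectors of $Q'$}, i.e.\ for ME-ful vectors that are already real Schur roots of $Q'$; the strengthened statement you use is false. For instance, for the $2$-Kronecker quiver $Q$ the vector $(2,1)$ is ME-ful and is the dimension vector of an exceptional $kQ$-module, so being a real Schur root of $Q$ does not force $Q$-ME-freeness. (Indeed, the claim ``all $c$-vectors in an MGS of $Q$ are ME-free'' is essentially the hard content of Theorem~\ref{T1B}, which the paper proves later with additional input, and it even fails for non-acyclic quivers, as the example in Section~4 shows.) With the correct reading of Lemma~\ref{L2}(3), your first paragraph still yields the first assertion of the lemma — exactly as the paper does — but it yields only that $v$ is not simultaneously ME-ful and a real Schur root of $Q'$, not that $v$ is ME-free.

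Because of this, your argument for the second assertion collapses precisely in the case that carries its content: a $c$-vector $c$ of the MGS that is $Q$-ME-ful and an imaginary root of $Q'$. For such a $c$ you cannot invoke Lemma~\ref{L2}(1), since that requires ME-freeness. The paper instead uses the monotonicity of the Euler form under deleting arrows: since every entry of the Euler matrix of $Q$ is $\leq$ the corresponding entry for $Q'$ and $c$ may be taken nonnegative, $\langle c,c\rangle_{kQ}\leq\langle c,c\rangle_{kQ'}$, so if $c$ is an imaginary root of $Q'$ then $\langle c,c\rangle_{kQ}\leq\langle c,c\rangle_{kQ'}<1$ and $c$ cannot be a real Schur root of $Q$, contradicting the realness lemma. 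This inequality (which is established inside the proof of Lemma~\ref{L2}(2)) is what you need in place of the ``first force ME-freeness, then apply (1)'' step; with that substitution your proof matches the paper's.
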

\begin{proof}
\indent Due to \ref{L2}(3) we only need to prove the second part. In that case $\langle c,c\rangle_{kQ} \leq \langle c,c\rangle_{kQ'} < 1$. Hence $c$ is not a $c$-vector of $Q$.
\end{proof}
\indent Now we can easily establish the following theorem.
\begin{theorem}
MGSs of an acyclic quiver $Q$ are a subset of the set of $Q$-ME-free MGSs of its ME-free version, $Q'$.\label{T1B}
\end{theorem}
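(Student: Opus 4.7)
The plan is to show that the mutation sequence underlying an MGS $w = (k_1, \ldots, k_m)$ of $Q$, when applied to the initial seed $(B_{Q'}, I_n)$ of $Q'$, produces exactly the same sequence of $C$-matrices as it does for $Q$ and constitutes an MGS of $Q'$. Writing $C_t$ and $C_t'$ for the $C$-matrices in $Q$ and $Q'$ after $t$ mutations, I would prove $C_t = C_t'$ by induction on $t$, starting from the trivial case $C_0 = C_0' = I_n$.

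For the inductive step, the standard $c$-vector mutation formula in a green mutation at vertex $k = k_{t+1}$ reads $c_j^{\text{new}} = c_j + [b_{kj}^{(t)}]_+\, c_k$ for $j \neq k$ and $c_k^{\text{new}} = -c_k$. Because $B_Q$ and $B_{Q'}$ differ precisely at the multi-edge entries of $Q$, the exchange matrices $B_t^Q$ and $B_t^{Q'}$ can differ after successive mutations; the required equality $C_{t+1} = C_{t+1}'$ reduces to $[b_{kj}^{(t),Q}]_+ = [b_{kj}^{(t),Q'}]_+$ for every $j$. The key claim is that at every step of an MGS of $Q$ the discrepancy between these two matrices occurs only at entries whose positive parts are zero in both. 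I plan to establish this by contradiction: if $b_{kj}^{(t),Q} > 0$ were strictly larger than its $Q'$-counterpart, the update by $[b_{kj}^{(t),Q}]_+ c_k$ would produce a $c_j^{\text{new}}$ whose support meets both endpoints of some multi-edge of $Q$, making $c_j^{\text{new}}$ a $Q$-ME-ful vector. Combining Lemma \ref{L2}(3) (such a vector cannot be a real Schur root of $Q'$), the preceding lemma (which forbids $Q$-ME-ful $c$-vectors of $Q'$ and imaginary roots of $Q'$ in $w$), and Lemma \ref{L} (which precludes mutations producing transformations $-c_1 \mapsto -c_1 - kc_2$ with $k>1$), one should reach a contradiction with the assumption that $k_{t+1}$ is a valid mutation in an MGS of $Q$.

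Once $C_t = C_t'$ is secured for all $t$, the greenness of each mutation transfers automatically since it depends only on the sign of the column being mutated, and $C_m = C_m'$ has all negative columns, so the same sequence is maximal green in $Q'$ as well. Every column of every $C_t'$ is then a $c$-vector of $Q'$, and the preceding lemma forces each to be $Q$-ME-free, so the resulting MGS of $Q'$ is $Q$-ME-free. Under the equivalence of green sequences defined earlier, this exhibits $w$ as a $Q$-ME-free MGS of $Q'$.

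The main obstacle is the inductive step: tracking how multi-edges of $Q$ propagate through the mutated $B_t^Q$ under successive mutations and showing that the resulting discrepancies with $B_t^{Q'}$ never enter the positive part of the relevant entry $b_{k_{t+1}, j}^{(t)}$. This combinatorial/homological control relies on the $Q$-ME-freeness of intermediate $c$-vectors, the Hom- and Ext-vanishing properties of simple-minded collections, and Lemma \ref{L}. An alternative route would pass through the semi-invariant wall-and-chamber picture, lifting $w$ to a generic green path via the correspondence of the first author's theorem, observing that every wall it crosses is $Q$-ME-free (hence shared with $Q'$'s picture by Lemma \ref{L2}(1)), and perturbing slightly to avoid any extraneous $Q$-ME-ful walls of $Q'$.
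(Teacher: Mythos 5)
Your overall skeleton (compare the two mutation processes column by column, locate the first place where the $C$-matrices of $Q$ and $Q'$ could diverge, and argue that a divergence would force a $Q$-ME-ful $c$-vector) matches the paper's minimal-counterexample argument, but the step you explicitly defer as ``the main obstacle'' is precisely the mathematical content of the proof, and the lemmas you propose to combine do not close it. A $Q$-ME-ful $c$-vector $c$ appearing in an MGS of $Q$ would be a real Schur root of $Q$, so by Lemma \ref{L2}(2) it satisfies $\langle c,c\rangle_{kQ'}\geq 3$; consequently it is neither a $c$-vector of $Q'$ nor an imaginary root of $Q'$, and so Lemma \ref{L2}(3) and the lemma immediately preceding Theorem \ref{T1B} say nothing about it: those statements only exclude vectors that are ($Q$-ME-ful) real Schur roots of $Q'$ or imaginary roots of $Q'$. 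The contradiction the paper actually derives is representation-theoretic: at the first divergence the putative new $c$-vector must be the class of an extension $M$ of two $Q$-ME-free exceptional modules $A,B$ belonging to the current, still shared, simple-minded collection, with $\dim \mathrm{Ext}^1_{kQ}(A,B)=1$ forced by Lemma \ref{L} and $\mathrm{Ext}^1_{kQ}(B,A)=0$, so that $\langle M,M\rangle_{kQ}=1$; then Lemma \ref{L2}(2) gives $\langle M,M\rangle_{kQ'}\geq 3$, while bilinearity together with the rigidity and $\mathrm{Hom}$-orthogonality of $A,B$ gives $\langle M,M\rangle_{kQ'}=2-\dim \mathrm{Ext}_{kQ'}(A,B)-\dim \mathrm{Ext}_{kQ'}(B,A)\leq 2$, a contradiction. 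Without this Euler-form/extension computation (or an equivalent substitute) your induction does not go through.

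There are also two gaps inside the same inductive step. First, you only treat a discrepancy in which the $Q$-entry $b^{(t),Q}_{kj}$ is positive and larger than its $Q'$-counterpart; a discrepancy in which the $Q'$-entry has the larger positive part makes the $C$-matrices diverge without any $Q$-ME-ful $c$-vector appearing in the MGS of $Q$, and needs a separate argument (this is the paper's second case, where it argues that if no $Q$-ME-ful vector is involved the relevant $\mathrm{Hom}$ and $\mathrm{Ext}$ groups of $kQ$ and $kQ'$ coincide, so the two mutations agree). Second, your assertion that the update $c_j+[b^{(t),Q}_{kj}]_+c_k$ has support meeting both ends of a multiple edge is clear when the affected column stays positive, but when that column is (or becomes) negative, sign-coherence allows cancellation to shrink the support, so the ``ME-ful support'' conclusion requires justification (for instance via the identity $B_t=C_t^{T}B_0C_t$ relating discrepancies of the mutated exchange matrices to the multi-edge entries of $B_0^Q-B_0^{Q'}$), not just assertion. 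Your alternative wall-crossing route via strongly generic green paths is closer in spirit to the machinery the paper sets up but is likewise only sketched.
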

\begin{proof}
\indent If the statement is incorrect along an MGS of $Q$ pick the first $C$-matrix that isn't shared by $Q'$ assuming that such an MGS exists. \\
\indent In this case either at least one $c$-vector is $Q$-ME-ful or none is. If some $c$-vector is $Q$-ME-ful it must be formed by extending one $Q$-ME-free exceptional module by another $Q$-ME-free exceptional module in $Q$ (i.e. $dim Ext_{kQ}(A,B) = 1$ because it can not be larger due to Lemma \ref{L}. Let's label the indecomposable module formed by the extension $M$. We need $Ext_{kQ}(B,A) = 0$ so that $\langle M,M\rangle_{kQ} = 1$) while in $Q'$  there are no such extensions (i.e. $dim Ext_{kQ'}(A,B) = 0$). However this is impossible because $A, B$ are rigid, $Hom$-orthogonal and indecomposable because $\langle M,M\rangle_{kQ} - \langle M,M\rangle_{kQ'} \leq -2$ which causes $\langle M,M\rangle_{kQ'}$ to be at least 3 which is impossible because $\langle M,M\rangle_{kQ'} = \langle A,A\rangle_{kQ'} + \langle A,B\rangle_{kQ'} + \langle B,A\rangle_{kQ'} + \langle B,B\rangle_{kQ'} = 2 - dim Ext_{kQ'}(A,B) - dim Ext_{kQ'}(B,A)$ is at most 2.\\
\indent If no $c$-vector is $Q$-ME-ful then in $kQ$, $kQ'$ the relevant $Hom$ and $Ext$ groups shouldn't differ because neither of them involve the multiple edges that are absent in $kQ'$ . As a result that can't happen either.\\
\indent Hence the $C$-matrices corresponding to $Q, Q'$ in the MGS are all the same. Any MGS of $Q$ must be an MGS of $Q'$ with the same $C$-matrices. Since all the $C$-matrices of the two quivers are the same they have the same associated permutation.\\
\end{proof}
\indent Now we can prove a stronger result. In order to do so we will use a sequence of $C$-matrices as MGSs.\\
\begin{theorem}\label{T3B}
Let $Q$ be an ME-ful acyclic quiver and $Q'$ be its ME-free version. The MGSs of $Q$ are exactly the $Q$-ME-free MGSs $(C_0,C_1,\cdots C_m)$ of $Q'$ such that for any multiple edge from $i$ to $j$ in $Q$ for any $C$-matrix $C_i$ in the MGS such that there exists a negative $c$-vector with support containing $i$ the mutation on $C_i$ in the MGS isn't done on any negative $c$-vector with support containing $j$.
\end{theorem}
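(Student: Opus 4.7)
My plan is to establish both containments. The forward direction (every MGS of $Q$ lies in the described family) follows quickly from Theorem~\ref{T1B} combined with Lemma~\ref{L}; the reverse is more substantive and proceeds by induction on the mutation step.

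For the forward direction, Theorem~\ref{T1B} already identifies any MGS of $Q$ with a $Q$-ME-free MGS of $Q'$, so only the multi-edge condition remains. I would argue by contradiction: assume a multi-edge $i\to j$ of multiplicity $b\geq 2$ in $Q$ and a step at which $C_t$ contains a negative $c$-vector $-c_1$ with $i\in\mathrm{supp}(c_1)$ while the mutation is performed on a negative $c$-vector $-c_2$ with $j\in\mathrm{supp}(c_2)$. Because $c_1, c_2$ are $Q$-ME-free, Lemma~\ref{L2} allows the corresponding bricks $M_1, M_2$ to be viewed as modules over both $kQ$ and $kQ'$, and as distinct elements of a simple-minded collection $\mathrm{Hom}_{kQ}(M_1,M_2)=0$. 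The $b-1$ extra arrows of the multi-edge contribute at least $(b-1)c_1(i)c_2(j)\geq 1$ to the gap $\dim\mathrm{Ext}^1_{kQ}(M_1,M_2)-\dim\mathrm{Ext}^1_{kQ'}(M_1,M_2)$, and a direct Euler-form computation using this together with the contribution of the single $i\to j$ arrow in $Q'$ yields $\dim\mathrm{Ext}^1_{kQ}(M_1,M_2)\geq 2$. Lemma~\ref{L} then forbids the mutation, contradicting our assumption.

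For the reverse direction, let $(C_0,\ldots,C_m)$ be a $Q$-ME-free MGS of $Q'$ satisfying the multi-edge condition. I would prove by induction on $t$ that applying the same mutation sequence to $\hat Q$ in the mutation theory of $kQ$ produces the matrices $C_t$ step by step. The base case $C_0=-I$ is immediate. For the inductive step, suppose the mutation at step $t$ is on column $v$. Column $v$ (which holds $-c_v$) negates identically in both $kQ$ and $kQ'$; any other column $c_j$ transforms by a rule driven by the current $B$-matrix entry $b_{v,j}$. The difference $b_{v,j}^Q-b_{v,j}^{Q'}$ comes purely from multi-edges of $Q$, mediated by the Euler pairings of the current simple-minded collection $\mathcal{S}_t$. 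The multi-edge condition is calibrated so that whenever this difference is nonzero, the sign pattern required to activate column $j$'s transformation is excluded; equivalently, the pairs $(c_v,c_j)$ with differing Euler pairings lie outside the active region of the mutation formula. Hence $C_{t+1}^Q=C_{t+1}^{Q'}$, and greenness is preserved across the step since the signs of all columns agree.

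The main obstacle is the rigorous verification in the inductive step that the multi-edge condition really controls every $B$-matrix discrepancy relevant to the mutation. I expect the cleanest route to pass through simple-minded collections: each $C_t$ corresponds to an SMC $\mathcal{S}_t$ of $Q$-ME-free bricks which, by Lemma~\ref{L2}, are unambiguous modules of both $kQ$ and $kQ'$. The SMC mutation rule of \cite{KY12} expresses the transformation of each brick $M_j$ in terms of $\mathrm{Hom}$ and $\mathrm{Ext}^1$ groups with the mutated brick $M_v$; the Hom groups agree between $kQ$ and $kQ'$ (morphisms only see the ME-free intersection of supports), while $\mathrm{Ext}^1$ discrepancies arise solely from multi-edges, and by Lemma~\ref{L} any such discrepancy that would actually alter the mutation is exactly the one ruled out by the theorem's condition. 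Carrying this bookkeeping across all overlap configurations of supports with multi-edge endpoints is the delicate step.
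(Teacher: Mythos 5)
You take essentially the same approach as the paper: the forward inclusion via Theorem \ref{T1B} together with Lemma \ref{L}, and the reverse inclusion by matching the $kQ$- and $kQ'$-mutation steps one at a time, with all $\mathrm{Hom}/\mathrm{Ext}^1$ discrepancies traced to multiple edges joining the supports of the two relevant $c$-vectors and excluded by the stated condition, which is exactly the paper's (terser) Euler-form argument. One small slip: by the paper's definition the ME-free version $Q'$ deletes every arrow of a multiple edge (keeping one arrow would give the skeleton), so there is no ``single $i\to j$ arrow in $Q'$'' to invoke; this only strengthens your bound, since $\mathrm{Hom}_{kQ}(M_1,M_2)=\mathrm{Hom}_{kQ'}(M_1,M_2)=0$ yields $\dim\mathrm{Ext}^1_{kQ}(M_1,M_2)\ge \dim\mathrm{Ext}^1_{kQ'}(M_1,M_2)+b\,c_1(i)c_2(j)\ge 2$ directly, so Lemma \ref{L} still applies.
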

\begin{proof}
Let's compare $\langle M, N \rangle_{kQ}$ and $\langle M, N \rangle_{kQ'}$. They differ if and only if there exists some multiple edge from $i$ to $j$ such that $i$ is in the support of $M$ and $j$ is in the support of $N$. In this case since $Hom_{kQ}(M,N) = Hom_{kQ'}(M,N)  = Ext_{kQ'}(M,N) = 0$ $dim Ext_{kQ}(M,N) > 0$. Repeating the argument in \ref{T1B} we can show that this is the only possible scenario for a $Q$-ME-free MGSs of $Q'$ to not be identical to an MGS in $Q$.
\end{proof}
\begin{corollary}\label{CB}
The following statements are true:
\begin{enumerate}
\item The number of maximal green sequences of a quiver $Q$ is no greater than that of its ME-free version.
\item All quivers with an MGS-finite ME-free version must themselves be MGS-finite.
\item No minimally MGS-infinite quiver can contain multiple edges.
\item Any two ME-equivalent quivers are MGS-equivalent to each other.
\end{enumerate}
\end{corollary}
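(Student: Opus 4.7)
The plan is to extract all four statements directly from Theorems \ref{T1B} and \ref{T3B}, using the equivalence of green sequences defined earlier as the bookkeeping device that lets us transport MGSs back and forth between $Q$ and its ME-free version $Q'$. In particular Theorem \ref{T1B} provides an injection from the MGSs of $Q$ into the MGSs of $Q'$ (identifying an MGS of $Q$ with the equivalent MGS of $Q'$ sharing the same sequence of mutation $c$-vectors and starting $C$-matrix up to permutation). Statement (1) is then immediate from the existence of this injection, and statement (2) follows by contrapositive: if $Q$ had infinitely many MGSs then so would $Q'$.

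For (3), I would argue by contradiction. Suppose $Q$ is minimally MGS-infinite and contains a multiple edge. Then its ME-free version $Q'$ is a proper sub-quiver of $Q$, obtained by deleting the extra copies of each multiple edge while retaining all vertices and all single edges. By minimality, $Q'$ must be MGS-finite; but then part (2) forces $Q$ itself to be MGS-finite, contradicting the assumption. The only subtlety is fixing the convention that deleting arrows produces a proper sub-quiver for the purposes of "minimally MGS-infinite," which matches the intent of the statement.

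For (4), let $Q_1$ and $Q_2$ be ME-equivalent, so that they share a common ME-free version $Q'$ and a common skeleton. From these two data one recovers the set $S$ of ordered pairs $(i,j)$ for which there is a multiple edge from $i$ to $j$ (precisely the edges of the skeleton that are absent from $Q'$), and this set is the same for $Q_1$ and $Q_2$. Theorem \ref{T3B} characterizes the MGSs of each $Q_k$ as those $Q_k$-ME-free MGSs $(C_0,\dots,C_m)$ of $Q'$ such that, for every pair $(i,j)\in S$ and every $C_t$ containing a negative $c$-vector whose support meets $i$, the mutation at $C_t$ is not performed on a negative $c$-vector whose support meets $j$. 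Since $S$ and $Q'$ are common to $Q_1$ and $Q_2$, and since the $Q_k$-ME-free property of a $c$-vector depends only on whether its support meets the vertex pairs in $S$, the two characterizations coincide. This yields a canonical bijection between the MGSs of $Q_1$ and those of $Q_2$.

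The argument as a whole is essentially formal bookkeeping once Theorems \ref{T1B} and \ref{T3B} are in hand. The mildest obstacle is to keep the equivalence relation between green sequences of different quivers precise enough that the words \emph{injection} in (1) and \emph{bijection} in (4) carry their intended meaning; once that identification is fixed, each of the four statements is a one-line deduction.
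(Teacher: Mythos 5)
Your proposal is correct and takes essentially the same route as the paper: (1)--(3) are read off from the injection provided by Theorem \ref{T1B}, and (4) follows because the characterization in Theorem \ref{T3B} depends only on the common ME-free version and the common set of multiple-edge pairs $(i,j)$, which is exactly the paper's one-line argument. One small slip worth fixing: by the paper's definitions the ME-free version deletes \emph{all} copies of each multiple edge (deleting only the extra copies yields the skeleton), but this does not affect your argument for (3), since either way $Q'$ is a proper subquiver of $Q$ with strictly fewer arrows and part (2) applies to the ME-free version as you use it.
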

\begin{proof}
\indent Only (4) needs to be proven even though it is still obvious. For ME-equivalent quivers $Q$ and $Q'$ the conditions of \ref{T3B} are identical which is why the number of MGS are identical.
\end{proof}
\begin{example}
The maximal green sequences of $Q: \begin{tikzcd}
1\righttwicedoublearrow\arrow[rd] &  & 3\\
 & 2\arrow[ur]
\end{tikzcd}$ are maximal green sequences of its ME-free version $Q': 1\to 2\to 3$ that has no $c$-vector with support containing $\{1,3\}$ and satisfies the conditions in \ref{T3B} with respect to the arrow $\begin{tikzcd} 1\rightdoublearrow & 3\end{tikzcd}$. It's easy to see that $Q$ is MGS-finite. In fact it has 3 MGSs.
\end{example}
\begin{example}
The maximal green sequences of $Q: \begin{tikzcd}
1\arrow[r] & 2 \rightdoublearrow&  3 \arrow[r] & 4\\
\end{tikzcd}$ are some maximal green sequences of its ME-free version $Q': \begin{tikzcd}[cramped, sep=small]
1\arrow[r] & 2 & 3 \arrow[r] & 4\\
\end{tikzcd}$ that has no $c$-vector with support containing $\{2,3\}$ and satisfies the conditions in \ref{T3B} with respect to the arrow $\begin{tikzcd} 2\rightdoublearrow & 3\end{tikzcd}$. It's easy to see that $Q$ is MGS-finite because $A_2$ is.
\end{example}
\indent Now we can provide a much shorter proof to the fact that all acyclic quivers with three vertices are MGS-finite which was originally proven in \cite{BDP13}.
\begin{corollary}
Any acyclic quiver with at most three vertices is MGS-finite.
\end{corollary}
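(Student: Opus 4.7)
\indent The plan is to reduce to the ME-free case via Corollary \ref{CB}(2) and then observe that every ME-free acyclic quiver on at most three vertices is of finite or tame type, so that the already established results of \cite{BDP13} finish the job without any appeal to the delicate wild three-vertex analysis of that paper.

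\indent First I would apply Corollary \ref{CB}(2): a quiver $Q$ is MGS-finite whenever its ME-free version $Q'$ is. Because $Q'$ is obtained from $Q$ merely by deleting the parallel arrows of multiple edges, it has the same vertex set as $Q$ and acyclicity is preserved. Hence it suffices to prove the corollary when $Q$ itself is ME-free, acyclic, and has at most three vertices.

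\indent Second I would enumerate such $Q$ by the underlying graph. On one or two vertices the only possibilities are $A_1$, $A_1\sqcup A_1$, or $A_2$, all Dynkin. On three vertices the underlying unoriented graph is simple and thus a subgraph of $K_3$, with at most $\binom{3}{2}=3$ edges. With zero, one, or two edges the graph is a forest, yielding a Dynkin quiver of type $A_1\sqcup A_1\sqcup A_1$, $A_2\sqcup A_1$, or $A_3$. With three edges the underlying graph is the triangle, and since $Q$ is acyclic the arrows cannot form a $3$-cycle, so $Q$ is an acyclically oriented triangle, a quiver of extended Dynkin type $\widetilde{A}_2$, which is tame. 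In every case $Q$ is of finite or tame type, and the finite/tame part of the theorem from \cite{BDP13} shows that $Q$ is MGS-finite. Combined with the first step, this finishes the proof.

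\indent There is essentially no obstacle once Corollary \ref{CB}(2) is in hand: it allows the wild three-vertex analysis of \cite{BDP13} to be bypassed completely, replacing it with a short enumeration that feeds into the easier finite/tame part. The only point to verify is that acyclicity passes from $Q$ to $Q'$, which is immediate from the description of the ME-free version as a subquiver.
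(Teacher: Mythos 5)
Your proposal is correct and follows essentially the same route as the paper: reduce to the ME-free version via Corollary \ref{CB}(2), note that every ME-free acyclic quiver on at most three vertices is of finite or tame type (your enumeration just makes this explicit, including the acyclic triangle of type $\widetilde{A}_2$), and invoke the finite/tame case of \cite{BDP13}.
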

\begin{proof}
Due to the theorem we only need to show that any ME-free acyclic quiver with at most three vertices is MGS-finite. Such a quiver is either of finite or tame type and is hence MGS-finite.
\end{proof}
\section{The general case}
\indent In the general case the theorems above aren't true. We can show that using the following counterexample. The quiver $Q$ here is $\begin{tikzcd}
1\arrow[rd] &  & 3\lefttwicedoublearrow\\
 & 2\arrow[ur]
\end{tikzcd}$.\\
$\begin{bmatrix} 
0 &1 & -2\\
-1 & 0 & 1\\
2 & -1 & 0\\
-1 & 0 & 0\\
0 & -1 & 0\\
0 & 0 & -1\\
\end{bmatrix}\overset{\mu_2}{\to}\begin{bmatrix} 
0 &-1 & -1\\
1 & 0 & -1\\
1 & 1 & 0\\
-1 & 0 & 0\\
-1 & 1 & 0\\
0 & 0 & -1\\
\end{bmatrix}\overset{\mu_1}{\to}\begin{bmatrix} 
0 &1 & 1\\
-1 & 0 & -1\\
-1 & 1 & 0\\
1 & -1 & -1\\
1 & 0 & -1\\
0 & 0 & -1\\
\end{bmatrix}\overset{\mu_3}{\to}\begin{bmatrix} 
0 &2 & -1\\
-2 & 0 & 1\\
1 & -1 & 0\\
0 & -1 & 1\\
0 & 0 & 1\\
-1 & 0 & 1\\
\end{bmatrix}\overset{\mu_1}{\to}\begin{bmatrix} 
0 &-2 & 1\\
2 & 0 & -1\\
-1 & 1 & 0\\
0 & -1 & 1\\
0 & 0 & 1\\
1 & 0 & 0\\
\end{bmatrix}\overset{\mu_2}{\to}\begin{bmatrix} 
0 &2 & -1\\
-2 & 0 & 1\\
1 & -1 & 0\\
0 & 1 & 0\\
0 & 0 & 1\\
1 & 0 & 0\\
\end{bmatrix}$\\
\indent Here we have a maximal green sequence with at least one ME-full $c$-vector. Moreover it is easy to see that if we replace the double edge by triple edge and obtain $Q': \begin{tikzcd}
1\arrow[rd] &  & 3\lefttwicetriplearrow\\
 & 2\arrow[ur]
\end{tikzcd}$\ (2,1,3,1,2) is not an MGS of the quiver $Q'$ nor is it an MGS of the ME-free version or skeleton of $Q$.\\
\indent However we can still perform quiver cutting in more limited situations. Let's first introduce a concept.
\begin{definition}
A $k$\textit{-edge} is a tuple $(i,j)$ where $i,j\in [n]$ and $k|b_{ij}, k|b_{ji}$.
\end{definition}
\begin{definition}
Let $Q$ be a quiver possibly having oriented cycles, let $k$ be an integer greater than 1. Assume that $Q_0$ = $\tilde{Q}_0 + \breve{Q}_0$, $P = Q]_{\tilde{Q}_0}, R = Q]_{\breve{Q}_0}$. If for all $i\in \tilde{Q}_0, j\in  \breve{Q}_0$ $k|b_{ij}$ and $k|b_{ji}$ we say $Q$ is $k$-\textit{partible} and $(\tilde{Q}, \breve{Q})$ is a $k$\textit{-partition} of $Q$.
\end{definition}
\begin{theorem}
Assume that ($\tilde{Q},\breve{Q})$ are $k$-partition of $Q$ for some $k>1$ any MGS of $Q$ is an MGS of $\tilde{Q}\cup\breve{Q}$.\label{T2B}
\end{theorem}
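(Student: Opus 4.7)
The plan is to show that any MGS of $Q$, understood as a sequence of vertex labels, is also an MGS of $\tilde{Q}\cup\breve{Q}$ when the same mutations are performed on the latter. The overall strategy is to prove by induction on the step of the MGS that the two $C$-matrices (in $Q$ and in $\tilde{Q}\cup\breve{Q}$) coincide at every step. Since greenness of a vertex is determined entirely by its column of the $C$-matrix, this would immediately imply that every mutation is green in both quivers and that the terminal all-positive condition is reached simultaneously, yielding the theorem.

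The inductive step rests on comparing the $c$-vector mutation formula $c'_j=c_j+[-b_{\ell j}]_+c_\ell$ (for a green mutation at $\ell$, where $c_\ell\leq 0$) in the two quivers. For $j$ on the same side as $\ell$, the relevant intra-side $B$-entries agree in $Q$ and $\tilde{Q}\cup\breve{Q}$, so the update matches. For $j$ on the opposite side, the $\tilde{Q}\cup\breve{Q}$ update is trivial (the cross $B$-entry is zero), so one needs $[-b^Q_{\ell j}]_+=0$, equivalently $b^Q_{\ell j}\geq 0$. The $k$-partition hypothesis together with the elementary observation that mutation preserves $k$-divisibility of cross $B$-entries forces cross entries to lie in $k\mathbb{Z}$, so the bad case would be $b^Q_{\ell j}\leq -k\leq -2$; this would inject a nonzero multiple of $c^Q_\ell$ into $c^Q_j$, producing cross-support. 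Sign-coherence of $c$-vectors rules out the subcase $c^Q_j\geq 0$ immediately, since adding a nonzero non-positive vector supported on the side of $\ell$ to a nonzero non-negative vector supported on the opposite side would produce entries of both signs, contradicting sign-coherence of the resulting column. Only the subcase $c^Q_j\leq 0$, producing a negative $c$-vector with genuine cross-support, remains to be excluded.

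The main obstacle is ruling out this last subcase. The plan here is to argue by contradiction that once a negative cross-support $c$-vector appears, the MGS cannot terminate. Each subsequent green mutation modifies a non-mutated column of the $C$-matrix by adding a non-positive multiple of the mutated (negative) column, so the cross-support and the negative magnitudes of a cross-support column can only be preserved or grow until that column is itself mutated; after that mutation the column has the same cross-support but with opposite sign, and can only be reduced toward a positive simple root by subtracting still other cross-support negative columns, forcing cross-support to propagate rather than disappear. One formalizes this by exhibiting a non-negative integer potential on $C$-matrices which vanishes precisely on the block-diagonal $C$-matrices with respect to the partition and is non-decreasing under green mutation once it has become positive; this contradicts the known terminal form of the $C$-matrix of an MGS, in which each column is a positive unit vector.

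A more conceptual alternative for the same step would be to invoke the semi-invariant/wall-and-chamber interpretation of Theorem~1.1 of the first author: an MGS of $Q$ corresponds to a generic green path crossing walls associated to exceptional $kQ$-modules in order, and one shows that the $k$-partition hypothesis forces every wall associated to a cross-support exceptional module to carry an extra linear constraint which any generic green path avoids, so that the walls actually crossed coincide with those arising from single-sided exceptional modules, which are exactly the walls of $\tilde{Q}\cup\breve{Q}$.
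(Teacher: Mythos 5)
Your first half runs parallel to the paper's argument: the observation that $k$-divisibility of the cross entries of the extended exchange matrix is preserved under mutation, and the use of sign-coherence to kill the subcase where the modified column $c^Q_j$ is non-negative, are exactly the right ingredients. However, there are two genuine gaps. First, your same-side case is not automatic: you assert that ``the relevant intra-side $B$-entries agree in $Q$ and $\tilde{Q}\cup\breve{Q}$,'' but this is false for arbitrary green sequences. Mutating at a vertex $\ell$ of one part which has cross arrows both into and out of it adds a multiple of $k^2$ to an intra-side entry $b_{ij}$ of the other part in $Q$, while the corresponding entry of $\tilde{Q}\cup\breve{Q}$ is unchanged (e.g.\ mutate first at $3$ in $1\Rightarrow 3\Rightarrow 2$ with parts $\{1,2\}$ and $\{3\}$: $b_{12}$ becomes $4$ in $Q$ but stays $0$ in the disjoint union). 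So this step needs its own argument, for instance showing that along a maximal green sequence there is never a cross arrow into the vertex being mutated (which reduces again to the same two excluded subcases), or invoking $B_t=C_t^{\mathsf T}B_0C_t$ together with block-diagonality of $C_t$; as written, your induction does not close.

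Second, and more seriously, the remaining subcase (a green mutation at $\ell$ adding $m\ge k\ge 2$ copies of the negative column $c_\ell$ to another negative column $c_j$, creating a negative cross-support $c$-vector) is precisely the point where the paper simply cites the Sink-before-Source (Target-before-Source) theorem of \cite{BHIT15}, in the form already used in Lemma~\ref{L}: in a maximal green sequence a negative $c$-vector is never replaced by $-c_j-mc_\ell$ with $m\ge 2$. You propose instead to rule it out by a monotone ``potential'' on $C$-matrices, but no potential is actually defined, and the heuristic monotonicity claim is unsound: a red cross-support column can later lose its cross support by the addition of negative columns supported on a single side (cancellation on one side does not require other cross-support columns), so ``cross-support can only propagate'' is not established, and with it the whole exclusion. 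The alternative wall-crossing sketch does not repair this: the semi-invariant picture result quoted in the paper is for hereditary algebras, whereas Theorem~\ref{T2B} explicitly allows oriented cycles, and the claim that generic green paths avoid all walls of cross-support modules is just a restatement of what has to be proved. Either supply the missing combinatorial argument in full, or do what the paper does and quote the theorem of \cite{BHIT15} at this point.
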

\begin{proof}
The property that for any $i\in Q_1$ and $j\in Q_2$ $k|c_{ij}$ is preserved by mutation. Hence any mutation that cause any $c$-vector to cross bot has to violate the Sink before Source Theorem.
\end{proof}
\begin{corollary}
Under the conditions of the theorem above, if $\tilde{Q}$ and $\breve{Q}$ are MGS-finite so is $Q$.
\end{corollary}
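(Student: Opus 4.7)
The plan is to chain together Theorem \ref{T2B} with the disconnected-quiver lemma proven in the background section. Theorem \ref{T2B} already does the heavy lifting: it tells us that every MGS of $Q$ is (equivalent to) an MGS of the disjoint union $\tilde{Q}\sqcup\breve{Q}$. So to bound the number of MGSs of $Q$ it suffices to bound the number of MGSs of $\tilde{Q}\sqcup\breve{Q}$.

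First I would invoke the lemma from Section 2 stating that a disconnected quiver is MGS-finite if and only if each of its connected components is MGS-finite. Applied to $\tilde{Q}\sqcup\breve{Q}$, the hypothesis that $\tilde{Q}$ and $\breve{Q}$ are each MGS-finite (noting that each of them may itself be disconnected, in which case the lemma is applied componentwise) yields that $\tilde{Q}\sqcup\breve{Q}$ has only finitely many MGSs.

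Next I would combine this with Theorem \ref{T2B}: the map sending an MGS of $Q$ to the corresponding MGS of $\tilde{Q}\sqcup\breve{Q}$ (they agree on the underlying sequence of $c$-vectors and mutated vertices) is injective, because the sequence of mutations and the starting extended exchange matrix determine the MGS. Hence the set of MGSs of $Q$ injects into a finite set and is therefore finite, giving MGS-finiteness of $Q$.

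I do not expect any genuine obstacle here, since the corollary is essentially bookkeeping on top of Theorem \ref{T2B}. The only subtle point worth flagging in the write-up is that ``MGS of $\tilde{Q}\sqcup\breve{Q}$'' in the statement of Theorem \ref{T2B} must be interpreted as an MGS of the disjoint union (not of $\tilde{Q}$ and $\breve{Q}$ separately), which is precisely what the disconnected-components lemma controls; once that identification is made explicit, the argument reduces to ``finite subset of a finite set is finite'' and the corollary follows in one line.
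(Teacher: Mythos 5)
Your proposal is correct and follows exactly the paper's own route: the paper likewise deduces that $\tilde{Q}\cup\breve{Q}$ is MGS-finite from the MGS-finiteness of $\tilde{Q}$ and $\breve{Q}$ (via the disconnected-components lemma of Section 2) and then applies Theorem \ref{T2B} to conclude $Q$ is MGS-finite. Your additional remarks on injectivity and the interpretation of ``MGS of $\tilde{Q}\cup\breve{Q}$'' simply make explicit what the paper leaves implicit.
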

\begin{proof}
If $\tilde{Q}$ and $\breve{Q}$ are MGS-finite so is $\tilde{Q}\cup\breve{Q}$. As a result so is $Q$ due to the theorem.
\end{proof}
\begin{example}
$Q:\begin{tikzcd}
1\arrow[rd] &                                             &                   &5\arrow[dd]\\
                 &2\arrow[dl]\rightdoublearrow & 4\arrow[ur] &\\
3\arrow[uu] &                                            &                  &6\arrow[ul]\\
\end{tikzcd}$ is a quiver with oriented cycles. Due to the theorem we can cut the $\begin{tikzcd}2\rightdoublearrow & 4\end{tikzcd}$ arrow. After cutting this arrow it is easy to see that $Q$ is MGS-finite.
\end{example}
\begin{example}
$Q:\begin{tikzcd}
 &       2\rightdoublearrow                                      &  3\arrow[rd]&\\
1\arrow[ru]                 &    &  &4\arrow[dl]\\
 &     6\arrow[ul]                                       & 5\leftquadruplearrow                 &\\
\end{tikzcd}$ is another quiver with oriented cycles.  Due to the theorem we can cut the $\begin{tikzcd}2\rightdoublearrow & 3\end{tikzcd}$ and  $\begin{tikzcd}6 & 5\leftquadruplearrow\end{tikzcd}$ arrows. After cutting these arrows it is easy to see that $Q$ is MGS-finite.
\end{example}
\bibliographystyle{amsplain}

\end{document}